\numberwithin{equation}{section}
\newcommand{\Z}{{\mathbb Z}}
\newcommand{\C}{{\mathbb C}}
\newcommand{\Se}{{\mathcal S}}
\newcommand{\A}{{\mathcal A}}
\DeclareMathOperator{\End}{End} 
\DeclareMathOperator{\Gal}{Gal} 
\DeclareMathOperator{\Mat}{Mat} 
\DeclareMathOperator{\id}{id}
\DeclareMathOperator{\lc}{lc} 
\DeclareMathOperator{\ld}{ld}
\newtheorem{lemma}{Lemma}
\newtheorem{theorem}{Theorem}
\newtheorem{proposition}[lemma]{Proposition}
\theoremstyle{definition}
\newtheorem{definition}{Definition}
\newtheorem{remark}{Remark}
\title{Finite-dimensional vertex algebra modules over fixed point differential subfields}
\author{Kenichiro Tanabe\footnote{Partially supported by JSPS
Grant-in-Aid for Scientific Research No. 20740002.}\\\\
Department of Mathematics\\
Hokkaido University\\
Kita 10, Nishi 8, Kita-Ku, Sapporo, Hokkaido, 060-0810\\
Japan\\\\
ktanabe@math.sci.hokudai.ac.jp}
\date{}
\begin{document}
\maketitle
\begin{abstract}
Let $K$ be a differential field  over $\C$ with derivation $D$,
$G$ a finite linear automorphism group over $K$ which preserves $D$,
and $K^G$ the fixed point subfield of $K$ under the action of $G$.
We show that every finite-dimensional vertex algebra 
$K^G$-module is contained in some twisted vertex algebra 
$K$-module.
\end{abstract}
{\it Keywords: }{vertex algebra; differential field}
\section{Introduction}
In \cite{B}, Borcherds defined the notion of vertex algebras
and showed that every
commutative ring $A$ with an arbitrary derivation $D$
has a structure of vertex algebra.
Every ring $A$-module naturally becomes
a vertex algebra $A$-module. 
However, this does not imply that
ring $A$-modules and vertex algebra $A$-modules
are the same.
In fact, a vertex algebra $\Z[z,z^{-1}]$-module 
which is not a ring $\Z[z,z^{-1}]$-module was given in \cite[Section 8]{B},
where $\Z[z,z^{-1}]$ is the ring of Laurent polynomials over $\Z$.
Moreover, in \cite{T} for the polynomial ring $\C[s]$ in one variable $s$ 
 with derivation $D$,
I obtained a necessary and sufficient condition on $D$ 
that there exist finite-dimensional vertex algebra $\C[s]$-modules 
which do not come from associative algebra $\C[s]$-modules.
Thus, in general vertex algebra $A$-modules and ring $A$-modules 
are certainly different. 

Let $K$ be a differential field with derivation $D$,
$G$ a finite linear automorphism group of $K$ which preserves $D$,
and $K^G$ the fixed point subfield of $K$ under the action of $G$.
In this paper, we study vertex algebra $K^G$-modules.
Here, let us recall the following conjecture on vertex operator algebras:
let $V$ be a vertex operator algebra and $G$ a finite automorphism group of $V$.
It is conjectured that under some conditions on $V$,
every irreducible module over the fixed point vertex operator subalgebra $V^G$
is contained in some irreducible $g$-twisted 
$V$-module for some $g\in G$ (cf.\cite{DVVV}).
The motivation for studying vertex algebra $K^G$-modules
is to investigate this conjecture for vertex algebras.
In Theorem \ref{theorem:correspondence}, I shall show that 
every finite-dimensional indecomposable vertex algebra $K^G$-module becomes 
a $g$-twisted vertex algebra $K$-module for some $g\in G$.
Namely, the conjecture holds for 
all finite-dimensional vertex algebra $K^G$-modules in a stronger sense.

This paper is organized as follows.
In Section 2 we recall 
some properties 
of vertex algebras and their modules.
In Section 3 we show that 
every finite-dimensional indecomposable vertex algebra $K^G$-module 
becomes a $g$-twisted vertex algebra $K$-module 
for some $g\in G$.
In Section 4  
we give the classification of the
finite-dimensional vertex algebra $\C(s)$-modules
where $\C(s)$ is the field of rational functions in one variable $s$.
In Section 5 for all quadratic extensions $K$ of $\C(s)$
and all finite-dimensional indecomposable vertex algebra $\C(s)$-modules $M$
obtained in Section 4,
we study twisted vertex algebra $K$-module structures over $M$.

\section{Preliminary}

We assume that the reader is familiar with the basic knowledge on
vertex algebras as presented in \cite{B,DLM1,LL}.

Throughout this paper, 
$\zeta_p$ is
a primitive $p$-th root of unity for a positive integer $p$
and $(V,Y,{\mathbf 1})$ is a vertex algebra.
Recall that $V$ is the underlying vector space, 
$Y(\cdot,x)$ is the linear map from $V$ to $(\End V)[[x,x^{-1}]]$,
and ${\mathbf 1}$ is the vacuum vector.
Let ${\mathcal D}$ be the endomorphism of $V$
defined by ${\mathcal D}v=v_{-2}{\mathbf 1}$ for $v\in V$.
 
First, we recall some results in \cite{B} for a vertex algebra constructed from a 
commutative associative algebra with a derivation.

\begin{proposition}{\rm\cite{B}}\label{proposition:comm-alg}
The following hold:
\begin{enumerate}
\item
Let $A$ be a commutative associative $\C$-algebra with identity element $1$ and 
$D$ a derivation of $A$.
For $a\in A$, define $Y(a,x)\in(\End A)[[x]]$ by
\begin{align*}
Y(a,x)b&=\sum_{i=0}^{\infty}\dfrac{1}{i!}(D^ia)bx^{i}
\end{align*}
for $b\in A$. Then, $(A,Y,1)$ is a vertex algebra.
\item
Let $(V,Y,{\mathbf 1})$ be a vertex algebra such that
$Y(u,x)\in(\End V)[[x]]$ for all $u\in V$.
Define a multiplication on $V$ by $u v=u_{-1}v$ for $u,v\in V$. 
Then, $V$ is a commutative associative $\C$-algebra with identity element ${\mathbf 1}$ and 
${\mathcal D}$ is a derivation of $V$.
\end{enumerate}
\end{proposition}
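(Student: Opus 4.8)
The plan is to prove the two parts separately, since (1) constructs a vertex algebra from algebraic data and (2) reverses the construction.

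**Proof plan.**

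For part (1), the goal is to verify the defining axioms of a vertex algebra for the triple $(A,Y,1)$ with $Y(a,x)b=\sum_{i\ge 0}\tfrac{1}{i!}(D^ia)b\,x^i$. First I would check the easy axioms: $Y(1,x)=\id_A$ follows because $D^i 1=0$ for $i\ge 1$, leaving only the $i=0$ term; and the vacuum/creation property $Y(a,x)1=a+O(x)$ holds since $(D^ia)\cdot 1=D^ia$, so $Y(a,x)1=\sum_i\tfrac{1}{i!}(D^ia)x^i$, which lies in $A[[x]]$ and has constant term $a$. The substantive axiom is the Borcherds (Jacobi) identity, or equivalently weak commutativity and weak associativity. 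Because every $Y(a,x)$ involves only nonnegative powers of $x$, the usual delta-function formalism simplifies dramatically: the singular terms vanish and locality becomes the plain statement $[Y(a,x),Y(b,y)]=0$. I would therefore verify directly that $Y(a,x)Y(b,y)c=Y(b,y)Y(a,x)c$ for all $a,b,c\in A$. Expanding both sides and using commutativity and associativity of $A$ reduces this to the Leibniz-type identity $D^n(ab)=\sum_{k=0}^n\binom{n}{k}(D^ka)(D^{n-k}b)$, which holds because $D$ is a derivation. The main technical point, and the step I expect to be the real obstacle, is organizing the bookkeeping of the two formal-variable expansions so that the product $Y(a,x)Y(b,y)$ is genuinely seen to be symmetric in $(a,x)\leftrightarrow(b,y)$; once commutativity is in hand, the full Jacobi identity follows from commutativity together with the $\mathcal D$-derivative property $Y(Da,x)=\tfrac{d}{dx}Y(a,x)$, which here is immediate from the explicit formula.

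For part (2), I would run the construction in reverse. Given that $Y(u,x)\in(\End V)[[x]]$ for all $u$, define $uv=u_{-1}v$. The identity axiom $Y(\1,x)=\id_V$ gives $\1_{-1}=\id$ and $\1_n=0$ for $n\ne -1$, so $\1$ is a two-sided identity for this product. Commutativity $u_{-1}v=v_{-1}u$ follows from skew-symmetry of the vertex algebra, which for truncated (holomorphic) fields degenerates to ordinary commutativity. Associativity $(u_{-1}v)_{-1}w=u_{-1}(v_{-1}w)$ is extracted from the appropriate coefficient of the weak associativity relation; because only nonnegative modes survive, the relevant coefficient reads off cleanly. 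Finally, to see that $\mathcal D$ is a derivation I would use the standard identity $Y(\mathcal D u,x)=\tfrac{d}{dx}Y(u,x)$, equivalently $(\mathcal D u)_{n}=-n\,u_{n-1}$; taking $n=-1$ gives $(\mathcal D u)_{-1}=u_{-2}$, and combining this with the $\mathcal D$-bracket relation $\mathcal D(u_{-1}v)=(\mathcal D u)_{-1}v+u_{-1}(\mathcal D v)$ yields exactly the Leibniz rule $\mathcal D(uv)=(\mathcal D u)v+u(\mathcal D v)$. Since these are all cited as results of \cite{B}, I would present this direction as a short verification rather than a full derivation, and I expect no serious obstacle here beyond correctly invoking the mode identities.
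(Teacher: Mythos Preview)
The paper does not give its own proof of this proposition: it is stated with the citation \cite{B} and is immediately followed by the next paragraph of text, with no \texttt{proof} environment. There is therefore nothing in the paper to compare your approach against; the result is simply quoted from Borcherds.

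Your outline is essentially the standard argument and would succeed, but one point in part (1) is misplaced. The commutativity $Y(a,x)Y(b,y)c=Y(b,y)Y(a,x)c$ does \emph{not} reduce to the Leibniz rule: expanding both sides gives
\[
\sum_{i,j\ge 0}\frac{1}{i!\,j!}(D^ia)(D^jb)\,c\,x^iy^j,
\]
and equality follows from the commutativity of $A$ alone. Where the Leibniz rule is genuinely needed is for the bracket form of translation covariance $[\mathcal D,Y(a,x)]=\partial_xY(a,x)$ (equivalently, for the multiplicativity $Y(ab,x)=Y(a,x)Y(b,x)$, which is the holomorphic version of weak associativity); this, together with locality and the vacuum/creation axioms, is what yields the Jacobi identity via the usual reconstruction argument. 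With that small reassignment of roles, both parts of your plan are sound.
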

Throughout the rest of this section,
$A$ is a commutative associative $\C$-algebra with identity element $1$  
over $\C$ and $D$ a derivation of $A$.
Let $(A,Y,1)$ be the vertex algebra constructed from 
$A$ and $D$ in Proposition \ref{proposition:comm-alg}
and let $(M,Y_M)$ be a module over vertex algebra $A$.
We call $M$ a {\em vertex algebra $(A,D)$-module} 
to distinguish 
between modules over vertex algebra $A$ 
and modules over associative algebra $A$.

\begin{proposition}{\rm\cite{B}}\label{proposition:comm-module}
The following hold:
\begin{enumerate}
\item
Let $M$ be an associative algebra $A$-module.
For $a\in A$, define $Y_M(a,x)\in(\End_{\C}M)[[x]]$ by
\begin{align*}
Y(a,x)u&=\sum_{i=0}^{\infty}\dfrac{1}{i!}(D^ia)ux^{i}
\end{align*}
for $u\in M$. Then, $(M,Y_M)$ is a vertex algebra $(A,D)$-module.
\item
Let $(M,Y_M)$ be a vertex algebra $(A,D)$-module such that
$Y(a,x)\in(\End_{\C}M)[[x]]$ for all $a\in A$.
Define an action of $A$ on $M$ by $au=a_{-1}u$ for $a\in A$ and $u\in M$. 
Then, $M$ is an associative algebra $A$-module.
\end{enumerate}
\end{proposition}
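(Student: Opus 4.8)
The plan is to verify both statements by direct computation with the vertex‑algebra module axioms, following the pattern of Proposition~\ref{proposition:comm-alg}. Throughout I write $Y_M(a,x)=\sum_{n\in\Z}a_nx^{-n-1}$. With this convention the prescribed formula in (1) says that $a_{-i-1}$ acts as multiplication by $(1/i!)D^ia$ and that $a_n=0$ for $n\ge0$, while in (2) the hypothesis $Y_M(a,x)\in(\End_{\C}M)[[x]]$ says exactly that $a_n=0$ on $M$ for all $n\ge0$, with $a_{-1}$ the coefficient of $x^0$.

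For part (1), the truncation condition is immediate because $Y_M(a,x)$ has only nonnegative powers of $x$, and the vacuum axiom $Y_M(1,x)=\id_M$ follows from $D1=0$ (hence $D^i1=0$ for $i\ge1$) together with the fact that $1$ acts as the identity on the associative $A$-module $M$. The remaining content is the Jacobi identity, and since every $Y_M(a,x)$ lies in $(\End_{\C}M)[[x]]$ it reduces to commutativity together with associativity. Commutativity, $[Y_M(a,x_1),Y_M(b,x_2)]=0$, is automatic: each $a_m$ acts by multiplication by an element of the commutative algebra $A$, and such multiplications commute on $M$. Associativity amounts to
\begin{equation*}
Y_M(Y(a,x_0)b,x_2)=Y_M(a,x_0+x_2)Y_M(b,x_2),
\end{equation*}
which after substituting the defining formulas reduces to the Leibniz expansion $D^j((D^ia)b)=\sum_{k}\binom{j}{k}(D^{i+k}a)(D^{j-k}b)$ and the binomial identity $(x_0+x_2)^p=\sum_k\binom{p}{k}x_0^{p-k}x_2^k$; this is the same computation that establishes Proposition~\ref{proposition:comm-alg}(1), now read inside the $A$-module $M$.

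For part (2), the vacuum axiom gives $1_{-1}=\id_M$, so the action is unital. Associativity $(ab)u=a(bu)$, where $ab=a_{-1}b$ is the product from Proposition~\ref{proposition:comm-alg}(2), is the assertion $(a_{-1}b)_{-1}=a_{-1}b_{-1}$ as operators on $M$. This follows from the Borcherds (iterate) identity
\begin{equation*}
(a_mb)_k=\sum_{i\ge0}(-1)^i\binom{m}{i}\bigl(a_{m-i}b_{k+i}-(-1)^mb_{m+k-i}a_i\bigr)
\end{equation*}
specialized to $m=k=-1$: using $\binom{-1}{i}=(-1)^i$ the signs simplify, and since $a_i=0$ for all $i\ge0$ while $b_{-1+i}=0$ for all $i\ge1$ on $M$, every term vanishes except the $i=0$ term $a_{-1}b_{-1}$, giving the claim.

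The computations are essentially routine; the only real care needed is the reduction of the Jacobi identity in part (1) to plain commutativity and associativity once all operators are known to lie in $(\End_{\C}M)[[x]]$, and the bookkeeping of which modes vanish in part (2). I expect no genuine obstacle, the result being the module counterpart of Proposition~\ref{proposition:comm-alg}.
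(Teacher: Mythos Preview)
The paper does not give its own proof of this proposition; it is simply cited from \cite{B}. Your direct verification is correct and is the standard argument: in (1) the reduction of the Jacobi identity to plain commutativity plus the associativity identity $Y_M(Y(a,x_0)b,x_2)=Y_M(a,x_0+x_2)Y_M(b,x_2)$ is legitimate precisely because every $Y_M(a,x)$ and every $Y(a,x_0)b$ lies in power series without poles, so weak commutativity and weak associativity become strict, and the Leibniz/binomial check you describe is exactly the computation behind Proposition~\ref{proposition:comm-alg}(1) read in $M$; in (2) your specialization of the Borcherds iterate formula at $m=k=-1$, combined with the vanishing of all nonnegative modes on $M$, cleanly yields $(a_{-1}b)_{-1}=a_{-1}b_{-1}$.
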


\begin{remark}
\begin{enumerate}
\item
Let us consider the case of $D=0$.
Let $(M,Y_M)$ be an arbitrary vertex algebra $(A,0)$-module. 
For all $a\in A$, since $0=Y_{M}(0a,x)=dY_{M}(a,x)/dx$,
we see that $Y_{M}(a,x)$ is constant.
Thus, $M$ is an associative algebra $A$-module by 
Proposition \ref{proposition:comm-module}.
\item
Let us consider the case that $A$ is finite-dimensional.
Let $(M,Y_M)$ be an arbitrary vertex algebra $(A,D)$-module.
Suppose that
there exists $a\in A$ and $u\in M$ such that
$Y_{M}(a,x)u$ is not an element of $M[[x]]$.
Since 
$Y_{M}(D^ia,x)=d^iY_{M}(a,x)/dx^i$ for all $i\geq 0$,
we see that $\{Y_{M}(D^ia,x)u\ |\ i=0,1,\ldots\}$
is linearly independent.
This contradicts that $A$ is finite-dimensional.
Thus, $M$ is an associative algebra $A$-module by 
Proposition \ref{proposition:comm-module}.
\end{enumerate}
\end{remark}
For a $\C$-linear automorphism $g$ of $V$ of finite order $p$,
set $V^r=\{u\in V\ |\ gu=\zeta_p^{r}u\}, 0\leq r\leq p-1\}$.
We recall the definition of $g$-twisted $V$-modules.
\begin{definition}\label{def:weak-twisted}
A $g$-twisted $V$-module $M$ is a vector space equipped with a
linear map
\begin{equation*}
Y_M(\,\cdot\,,x) \colon V\ni v  \mapsto Y_M(v,x) = \sum_{i \in (1/p)\Z}
v_i x^{-i-1} \in (\End_{\C}M)[[x^{1/p},x^{-1/p}]]
\end{equation*}
which satisfies the following four conditions:

\begin{enumerate}
\item $Y_M(u,x) = \sum_{i \in r/p+\Z}u_i x^{-i-1}$ for $u \in V^r$.
\item $Y_M(u,x)w\in M((x^{1/p}))$ for $u \in V$ and $w \in M$.
\item $Y_M({\mathbf 1},x) = \id_M$.
\item For $u \in V^r$, $v \in V^{s}$, 
$m\in r/T+\Z,\ n\in s/T+\Z$, and $l\in\Z$,
\begin{align*}
&\sum_{i=0}^{\infty}\binom{m}{i}
(u_{l+i}v)_{m+n-i} \\
& = 
\sum_{i=0}^{\infty}\binom{l}{i}(-1)^i
\big(u_{l+m-i}v_{n+i}+(-1)^{l+1}v_{l+n-i}u_{m+i}\big).
\end{align*}
\end{enumerate}
\end{definition}
The following result is well known 
(cf. \cite[Proposition 4.8]{K} and \cite[Proposition 3]{T}).
\begin{proposition}\label{proposition:iff-1}
Let $g$ be an automorphism of $V$ of finite order $p$,
$M$ a vector space, and 
$Y_{M}(\cdot,x)$ a linear map from
$V$ to $(\End_{\C}M)[[x^{1/p},x^{-1/p}]]$ such that
for all $0\leq r\leq p-1$ and all $u\in V^r$,
$Y_{M}(u,x)=\sum_{i\in r/p+\Z}u_{i}x^{-i-1}$.
Then, $(M,Y_M)$ is a $g$-twisted $V$-module
if and only if the following five conditions hold:
\begin{itemize}
\item[{\rm (M1)}] For $u\in V$  and $w\in M$, $Y_M(u,x)w\in M((x^{1/p}))$.
\item[{\rm (M2)}]  $Y_M({\mathbf 1},x)= \mathrm{id}_M$.
\item[{\rm (M3)}]  For $u\in V^{r},v\in V^{s}$,  
$m\in r/p+\Z$, and $n\in s/p+\Z$
\begin{align*}
[u_{m},v_{n}]&=\sum_{i=0}^{\infty}\binom{m}{i}(u_{i}v)_{m+n-i}.
\end{align*}
\item[{\rm (M4)}]  
For $u\in V^{r}$, $v\in V^{s}$, $m\in r/p+\Z$, and $n\in s/p+\Z$ 
\begin{align*}
\sum_{i=0}^{\infty}\binom{m}{i}(u_{-1+i}v)_{m+n-i}&=
\sum_{i=0}^{\infty}(u_{-1+m-i}v_{n+i}+v_{-1+n-i}u_{m+i}).
\end{align*}
\item[{\rm (M5)}]  For $u\in V$, $Y_M({\mathcal D}u,x)=dY_M(u,x)/dx$.
\end{itemize}
\end{proposition}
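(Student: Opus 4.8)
The plan is to prove the two implications separately, after first disposing of the trivial matches. The grading hypothesis on $Y_M$ is precisely condition (1) of Definition \ref{def:weak-twisted}, and (M1), (M2) are verbatim conditions (2), (3) of that definition; so the real content is the equivalence, granted (M1) and (M2), between condition (4) of Definition \ref{def:weak-twisted} (the twisted Borcherds identity for all $l\in\Z$) and the triple (M3), (M4), (M5).

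For the forward direction I would simply specialize the Borcherds identity in $l$. Taking $l=0$ and using $\binom{0}{i}=\delta_{i,0}$ collapses its right-hand side to $u_mv_n-v_nu_m$, giving (M3); taking $l=-1$ and using $\binom{-1}{i}=(-1)^i$ with $(-1)^{l+1}=1$ turns the right-hand side into $\sum_{i\ge0}(u_{-1+m-i}v_{n+i}+v_{-1+n-i}u_{m+i})$, which is (M4). For (M5) I would set $v=\mathbf 1$ and $l=-2$: since (M2) gives $\mathbf 1_j=\delta_{j,-1}\id_M$ and the axioms on $V$ give $u_{-1}\mathbf 1=u$, $u_{-2}\mathbf 1={\mathcal D}u$, $u_j\mathbf 1=0$ for $j\ge0$, the identity reduces for every $k$ to $({\mathcal D}u)_k=-k\,u_{k-1}$ on $M$, and assembling these components is exactly $Y_M({\mathcal D}u,x)=dY_M(u,x)/dx$.

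The substantive direction is the converse: assuming (M1)--(M5), recover the Borcherds identity for \emph{all} $l\in\Z$. The cases $l=0$ and $l=-1$ are already (M3) and (M4), so the task is to propagate the identity to every remaining integer. I would write the three-term identity in its formal delta-function form in variables $x_0,x_1,x_2$, observe that $\Res_{x_0}$ of it is the commutator formula (M3) and $\Res_{x_1}$ is the iterate formula (M4), and use (M5) in the form $\partial_{x_0}Y(u,x_0)v=Y({\mathcal D}u,x_0)v$ to fix the dependence of the iterate term $Y_M(Y(u,x_0)v,x_2)$ on the translation variable $x_0$. The truncation condition (M1) makes every $i$-sum finite on a fixed $w\in M$, so the formal manipulations are legitimate. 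Alternatively one can stay in components and run a two-sided induction on $l$, upward from $l=0$ and downward from $l=-1$, using Pascal's rule on the binomials together with (M5) in the form $({\mathcal D}u)_jv=-j\,u_{j-1}v$ to shift the mode index by one at each step.

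The hard part will be exactly this converse: that the $l=0$ and $l=-1$ slices alone force the entire family $l\in\Z$. The delicate bookkeeping is the interaction between the binomial recursion and the ${\mathcal D}$-shift across the infinite $i$-sums, and this is where (M1) and (M5) do the real work; I expect it to absorb most of the effort. This reduction is standard and follows the arguments in \cite[Proposition 4.8]{K} and \cite[Proposition 3]{T}.
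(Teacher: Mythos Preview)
The paper does not prove this proposition at all: it is stated with the remark ``The following result is well known (cf.\ \cite[Proposition 4.8]{K} and \cite[Proposition 3]{T})'' and no proof is given. Your sketch is a correct outline of the standard argument in those references, and you even cite the same two sources, so there is nothing to compare---your approach \emph{is} the intended one.

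One small remark on bookkeeping: in your converse direction you write ``use (M5) in the form $\partial_{x_0}Y(u,x_0)v=Y({\mathcal D}u,x_0)v$'' and later ``$({\mathcal D}u)_jv=-j\,u_{j-1}v$.'' These identities take place in $V$, not in $M$, so they are vertex-algebra axioms rather than instances of (M5). The module-level condition (M5) enters separately, when you relate $({\mathcal D}u)_k$ \emph{as an operator on $M$} to $-k\,u_{k-1}$. Both ingredients are needed in the induction on $l$ (or in the delta-function packaging), so just be careful to keep the $V$-level and $M$-level ${\mathcal D}$-formulas distinct when you write the details out.
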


Let ${\mathcal B}$ be a subset of $A$ which generate $A$ as a $\C$-algebra
and $g$ a linear automorphism of $A$ of finite order $p$.
For a $g$-twisted $A$-module $(M,Y_M)$,
we call $(M,Y_M)$ a {\em $g$-twisted vertex algebra $(A,D)$-module}.

\begin{lemma}
\label{lemma:comm-module}
Let $M$ be a vector space and $Y_{M}(\cdot,x)$ a linear map from
$A$ to $(\End_{\C}M)[[x^{1/p},x^{-1/p}]]$ such that
for all $0\leq r\leq p-1$ and all $a\in A^r$,
$Y_{M}(a,x)=\sum_{i\in r/p+\Z}a_{i}x^{-i-1}$.
Let $\A_{M}(A)$ denote the subalgebra of $\End_{\C}M$
generated by all $a_{i}$ where $a\in A$ and $i\in (1/p)\Z$.
Suppose that $M$ is a finitely generated $\A_{M}(A)$-module. 

Then, $(M,Y_M)$ is a $g$-twisted vertex algebra $(A,D)$-module
if and only if the following five conditions hold:
\begin{enumerate}
\item For $a\in {\mathcal B}$, $Y_M(a,x)\in (\End_{\C}M)((x^{1/p}))$.
\item $Y_M({\mathbf 1},x)= \id_M$.
\item For $a,b\in {\mathcal B}$ and $i,j\in (1/p)\Z$,
$a_{i}b_{j}=b_{j}a_{i}$.
\item For $a\in {\mathcal B}$ and $b\in A$, $Y_{M}(ab,x)=Y_{M}(a,x)Y_{M}(b,x)$.
\item For $a\in {\mathcal B}$, $Y_M(Da,x)=dY_M(a,x)/dx$.
\end{enumerate}
In this case, $Y_M(\cdot,x)$ is a $\C$-algebra homomorphism from $A$ to $(\End_{\C}M)((x^{1/p}))$.
\end{lemma}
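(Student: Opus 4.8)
The plan is to apply Proposition~\ref{proposition:iff-1} with $V=A$, which reduces the statement ``$(M,Y_M)$ is a $g$-twisted vertex algebra $(A,D)$-module'' to the conjunction of (M1)--(M5). Before splitting into the two implications I would record two structural facts about the vertex algebra $A$ of Proposition~\ref{proposition:comm-alg}: from $Y(u,x)v=\sum_{j\geq0}\frac{1}{j!}(D^ju)vx^{j}$ one reads off that, for $u,v\in A$, $u_iv=0$ whenever $i\geq0$ while $u_{-1}v=uv$; and ${\mathcal D}u=u_{-2}\1=Du$, so ${\mathcal D}$ is exactly the derivation $D$. I would also note that $(\End_\C M)((x^{1/p}))$ is an associative $\C$-algebra (each coefficient of a product is a finite sum), a fact I use repeatedly, and that since (M3) and (M4) are bilinear in $u,v$ it suffices to treat $g$-homogeneous elements and then extend by linearity.

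For the implication (1)--(5) $\Rightarrow$ (M1)--(M5), the engine is to propagate the generator-level identities to all of $A$. Since ${\mathcal B}$ generates $A$, every $a\in A$ is a finite $\C$-linear combination of products of elements of ${\mathcal B}$; applying (4) repeatedly gives $Y_M(c_1\cdots c_k,x)=Y_M(c_1,x)\cdots Y_M(c_k,x)$, and then by linearity the full homomorphism identity $Y_M(ab,x)=Y_M(a,x)Y_M(b,x)$ for all $a,b\in A$. Combined with (1) this shows $Y_M(a,x)\in(\End_\C M)((x^{1/p}))$ for every $a$, which yields (M1) and, together with (2), the final $\C$-algebra homomorphism claim. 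Condition (M2) is just (2). For (M3) I would push the commutation relations in (3) from ${\mathcal B}$ to all of $A$ through the same product expansion (modes of distinct generators commute, so their products do), obtaining $[a_m,b_n]=0$ for all $a,b\in A$; since $u_iv=0$ for $i\geq0$ makes the right-hand side of (M3) vanish, this is precisely (M3). Condition (M4) then follows from the homomorphism identity and commutativity by the elementary bookkeeping that identifies $(uv)_{m+n}=\sum_{s+t=m+n-1}u_sv_t$, splits the sum at $t=n$, and uses $u_sv_t=v_tu_s$ to recast the $t\leq n-1$ part as the second sum on the right-hand side of (M4). Finally (M5) is obtained by extending (5) off ${\mathcal B}$ via the Leibniz rule $D(ab)=(Da)b+a(Db)$ together with the homomorphism identity.

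For the converse I would assume (M1)--(M5). Then (M2) and (M5) restrict directly to (2) and (5), using ${\mathcal D}=D$; (M3) gives $[a_i,b_j]=0$ and hence (3); and the computation used for (M4), read in reverse, gives $Y_M(ab,x)=Y_M(a,x)Y_M(b,x)$ and hence (4). The one step that genuinely needs the standing hypothesis is (1). From (M1) I only know $Y_M(a,x)w\in M((x^{1/p}))$ with a lower truncation bound that may depend on $w$, whereas (1) demands a bound uniform in $w$, i.e.\ that $a_i$ vanish as an operator for all small $i$. Here I would use that (M3) forces all modes to commute, so $\A_M(A)$ is commutative and every mode commutes with it. Choosing generators $w_1,\dots,w_r$ of $M$ over $\A_M(A)$ and a common lower bound $N$ for the finitely many series $Y_M(a,x)w_s$, commutativity lets me write $Y_M(a,x)\big(\sum_s\phi_sw_s\big)=\sum_s\phi_s\,Y_M(a,x)w_s$ for $\phi_s\in\A_M(A)$, and since each $\phi_s$ is independent of $x$ this has lower bound $N$ for every vector. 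Hence $a_i=0$ for $i/p<N$, giving (1).

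I expect this last point to be the main obstacle: the upgrade from pointwise truncation (M1) to uniform operator truncation (1) is exactly where finite generation of $M$ over $\A_M(A)$ is indispensable, and it works only because (M3) supplies the commutativity needed to reduce an arbitrary vector to the finitely many generators. Everything else amounts to propagating the homomorphism, commutativity, and derivation identities from the generating set ${\mathcal B}$ to all of $A$, which is routine once the ring structure of $(\End_\C M)((x^{1/p}))$ is in place.
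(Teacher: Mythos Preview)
Your proof is correct and follows the same strategy as the paper: both reduce to Proposition~\ref{proposition:iff-1} and verify the equivalence of (M1)--(M5) with (1)--(5) using that $u_iv=0$ for $i\geq 0$, the homomorphism identity, commutativity of modes, and the Leibniz rule. One small ordering point in your converse direction: you deduce (4) from (M4) before establishing (1), but the product $Y_M(a,x)Y_M(b,x)$ appearing in (4) is only well-defined as an element of $(\End_{\C}M)((x^{1/p}))$ once $Y_M(a,x)$ is known to be truncated below, so the paper first proves (1) from (M1), (M3) and finite generation (exactly your argument at the end) and then derives (4) from (M4), (1), and (3); in particular (4), too, indirectly relies on the finite-generation hypothesis.
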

\begin{proof}
We use Proposition \ref{proposition:iff-1}.
Note that for all $a,b\in A$ and all $i\geq 0$, 
we have $ab=a_{-1}b$ and $a_ib=0$.
Suppose that $(M,Y_{M})$ is a $g$-twisted vertex algebra $(A,D)$-module.
We have (3) by (M3).
Since $M$ is a finitely generated $\A_{M}(A)$-module,
we have (1) by (M1) and (3). 
We have (4) by (M4), (1), and (3). The other conditions clearly hold.

Conversely, suppose that $(M,Y_{M})$ satisfies the conditions (1)--(5).
It follows from (1),(2), and (4) that 
$Y_{M}(ab,x)=Y_{M}(a,x)Y_{M}(b,x)$ for all $a,b\in A$,
namely $Y(\cdot,x)$ is a $\C$-algebra homomorphism 
from $A$ to $(\End_{\C}M)((x^{1/p}))$.
This shows (M3) by (3) and hence (M4).
Let $a,b$ be elements of $A$ which satisfy (M5).
Then,
\begin{align*}
Y_M(D(ab),x)&=Y_M((Da)b+a(Db),x)\\
&=Y_M(Da,x)Y_M(b,x)+Y_M(a,x)Y_M(Db,x)\\
&=(\dfrac{d}{dx}Y_M(a,x))Y_M(b,x)+Y_M(a,x)\dfrac{d}{dx}Y_M(b,x)\\
&=\dfrac{d}{dx}(Y_M(a,x)Y_M(b,x))\\
&=\dfrac{d}{dx}(Y_M(ab,x)).
\end{align*}
Since $A$ is generated by ${\mathcal B}$, (M5) follows from (5).
We conclude that $(M,Y_M)$ is a $g$-twisted vertex algebra $(A,D)$-module.
\end{proof}

For a $g$-twisted vertex algebra $(A,D)$-module $(M,Y_{M})$ and 
a linear automorphism $h$ of $A$ which preserves $D$, 
define $(M,Y_{M})\circ h=(M\circ h, Y_{M\circ h})$
by $M\circ h=M$ as vector spaces and $Y_{M\circ h}(a,x) = Y_{M}(ha, x)$
for all $a\in A$.
Then, $(M,Y_{M})\circ h$ is an $h^{-1}gh$-twisted vertex algebra $(A,D)$-module. 

\section{Finite-dimensional vertex algebra modules over 
fixed point differential subfields}

Let $K$ be a differential field over $\C$ with derivation $D$
and let $G$ be a finite $\C$-linear automorphism group of $K$ of order $N$ which preserves $D$.
We fix a primitive element $\theta$ of $K$ over $K^G$ with
the minimal polynomial $P(Z)=\sum_{i=0}^{N}P_iZ^i\in K^{G}[Z]$ in $Z$.
For a finite-dimensional vertex algebra $(K^G,D)$-module $(M,Y_{M})$,
$g\in G$ of order $p$, and a linear map $\tilde{Y}(\cdot,x)$ from $K$ to $(\End_{\C}M)((x^{1/p}))$,
we call
$(M,\tilde{Y}_{M})$ 
a  {\em
$g$-twisted vertex algebra $(K,D)$-module structure over $(M,Y_{M})$}
if $(M,\tilde{Y}_{M})$ is a  $g$-twisted vertex algebra $(K,D)$-module 
and if $\tilde{Y}(\cdot,x)|_{K^G}=Y(\cdot,x)$.

In this section, we shall show that 
every finite-dimensional indecomposable vertex algebra $(K^G,D)$-module $(M,Y_{M})$
has a $g$-twisted vertex algebra $(K,D)$-module structure over $(M,Y_M)$
for some $g\in G$. 
Here we give the outline of the proof.
It follows from Lemma \ref{lemma:comm-module}
that for any $g\in G$,
to construct a $g$-twisted vertex algebra $(K,D)$-module structure over $(M,Y_{M})$
is equivalent to construct a $\C$-algebra homomorphism from $K$ to $(\End_{\C}M)((x^{1/|g|}))$ 
with some conditions.
The basic idea to construct such a $\C$-algebra homomorphism is to realize the Galois extension $K/K^G$
in $(\End_{\C}M)((x^{1/N}))$ where 
we identify $K^G$ with its image 
under the homomorphism 
$Y_{M}(\cdot,x)\colon K^{G}\rightarrow (\End_{\C}M)((x))$.
To do this, we first define a $\C$-algebra homomorphism
$\psi[K^G,(M,Y_M)]\colon K^G\rightarrow \C((x))$ for $(M,Y_M)$ and denote by $Q$ 
the image of $\psi[K^G,(M,Y_M)]$. 
It is well known that any finite extension of $\C((x))$ is $\C((x^{1/j}))$ for some
positive integer $j$ and $\Omega=\cup_{j=1}^{\infty}\C((x^{1/j}))$ is the algebraic closure
of $\C((x))$ (cf. \cite[Corollary 13.15]{E}). 
These results enable us to construct the extension $\hat{K}$ of $Q$ in $\Omega$
such that $\hat{K}\cong K$ as $\C$-algebras. 
Using $\theta,P(Z)$, and $\hat{K}$, we can construct the desired extension corresponding to $K$
in $(\End_{\C}M)((x^{1/N}))$.

We introduce some notation.
Let $R$ be a commutative ring and let $\Mat_n(R)$ denote 
the set of all $n\times n$ matrices with entries in $R$. 
Let $E_n$ denote the $n\times n$ identity matrix
and let $E_{ij}$ denote the matrix whose $(i,j)$ entry is $1$ and all other entries are $0$.
Define $\Delta_{k}(R)=\{(x_{ij})\in\Mat_n(R)\ |\ x_{ij}=0\mbox{ if $i+k\neq j$}\}$ for $0\leq k\leq n$.  
Then, for $a\in \Delta_k(R)$ and $b\in \Delta_{l}(R)$, we have $ab\in \Delta_{k+l}(R)$.
For $X=(x_{ij})\in \Mat_n(R)$ and $k=0,\ldots,n-1$, 
define the matrix $X^{(k)}=\sum_{i=1}^{n}x_{i,i+k}E_{i,i+k}\in \Delta_k(R)$.
For an upper triangular matrix $X$, we see that $X=\sum_{k=0}^{n-1}X^{(k)}$
and that the diagonal part of $X$ is $X^{(0)}$.
For a positive integer $m$ and 
$H=\sum_{i\in(1/m)\Z}H_{(i)}x^{i}\in (\Mat_{n}(R))[[x^{1/m},x^{-1/m}]]$ with $H_{(i)}\in \Mat_{n}(R)$,
$H^{(k)}$ denotes $\sum_{i\in(1/m)\Z}H_{(i)}^{(k)}x^{i}\in \Delta_{k}(R)[[x^{1/m},x^{-1/m}]]$ for $k=0,\ldots,n-1$.
For an $n$-dimensional vector space $M$ over $\C$,
we sometimes identify $\End_{\C}M$ with $\Mat_{n}(\C)$ and 
$(\End_{\C}M)[[x^{1/m},x^{-1/m}]]$ with $(\Mat_{n}(\C))[[x^{1/m},x^{-1/m}]]$ by fixing a basis of $M$
and use these symbols 
in the proofs of Theorem \ref{theorem:correspondence} and Theorem \ref{theorem:untwist}.

Let $A$ be a commutative associative $\C$-algebra,
$D$ a derivation of $A$, $g$ a $\C$-linear automorphism of $A$ of finite order $p$.
For a vector space $W$ over $\C$ and  a linear map $Y_{W}(\cdot,x)$ from
$A$ to $(\End W)[[x^{1/p},x^{-1/p}]]$,
${\A }_{W}(A)$ denotes the subalgebra of $\End W$
generated by all coefficients of
$Y_{W}(a,x)$ where $a$ ranges over all elements of $A$.
Let $M$ be a finite-dimensional $g$-twisted vertex algebra $(A,D)$-module. 
Then, ${\A }_{M}(A)$ is a commutative $\C$-algebra by Lemma \ref{lemma:comm-module} 
and $M$ is a finite-dimensional ${\A }_{M}(A)$-module. 
Let ${\mathcal J}_{M}(A)$ denote the Jacobson radical of ${\A }_{M}(A)$.
Since ${\A }_{M}(A)$ is a finite-dimensional commutative $\C$-algebra, 
the Wedderburn--Malcev theorem (cf.\cite[Section 11.6]{P}) says that 
$\A_{M}(A)
=\oplus_{i=1}^{m}\C e_i\oplus {\mathcal J}_{M}(A)$
and hence
\begin{align*}
{\A }_{M}(A)((x^{1/p}))
&=\oplus_{i=1}^{m}\C((x^{1/p})) e_i\oplus {\mathcal J}_{M}(A)((x^{1/p}))
\end{align*} 
where $e_1,\ldots,e_m$ are  primitive orthogonal idempotents of ${\A }_{M}(A)$.
In the case of $m=1$, which is equivalent to 
${\A }_{M}(A)$ being a local $\C$-algebra, we shall often identify the subalgebra $\C((x^{1/p}))\id$ of
${\mathcal A}_{M}(K)((x^{1/p}))$ with $\C((x^{1/p}))$.
For $X\in {\A }_{M}(A)$, $X^{[0]}$ denotes
the image of $X$ under the projection 
\begin{align}
\label{eq:proj}
{\A }_{M}(A)&=\oplus_{i=1}^{m}\C e_i\oplus {\mathcal J}_{M}(A)\nonumber\\
&\rightarrow\oplus_{i=1}^{m}\C e_i\cong \C^{m}
\end{align}
and is called the {\it semisimple part} of $X$.
For $H=\sum_{i\in(1/p)\Z}H_{(i)}x^{i}\in {\A }_{M}(A)((x^{1/p}))$ with $H_{(i)}\in {\A }_{M}(A)$,
$H^{[0]}$ denotes 
$\sum_{i\in(1/p)\Z}H_{(i)}^{[0]}x^{i}\in \C((x^{1/p}))^{\oplus m}$
and is called the {\it semisimple part} of $H$.
We denote by $\psi[{A,(M,Y_M)}]$ the $\C$-algebra homomorphism $Y_{M}(\cdot,x)^{[0]}$ 
from $A$ to $\C((x^{1/p}))^{\oplus m}$.
Note that ${\mathcal J}_{M}(A)^{n}((x^{1/p}))=0$ where $n=\dim_{\C}M$.
If $A$ is a field, then $\psi[{A,(M,Y_M)}]$ is an injective homomorphism 
from $A$ to $\C((x^{1/p}))^{\oplus m}$.
Since ${\A }_{M}(A)$ is commutative,
we sometimes identify $\End_{\C}M$ with $\Mat_{n}(\C)$
by fixing a basis of $M$ so that the representation  matrix of each element of ${\A }_{M}(A)$ with respect to the basis
is an upper triangular matrix. Under this identification,
for $H\in {\A }_{M}(A)((x^{1/p}))\subset (\End_{\C}M)((x^{1/p}))$ we see that $H^{[0]}=H^{(0)}$,
which is the diagonal part of $H$ defined above.

Let $M$ be a finite-dimensional indecomposable vertex algebra $(K^G,D)$-module.
Since ${\A }_{M}(K^G)$ is a commutative algebra, ${\A }_{M}(K^G)$ is a local algebra and hence
${\A }_{M}(K^G)=\C\id \oplus {\mathcal J}_{M}(K^G)$.
Let $(M,\tilde{Y}_{M})$ be 
a $g$-twisted vertex algebra $(K,D)$-module structure over $(M,Y_M)$.
Since ${\A }_{M}(K^G)$ 
is a subalgebra of ${\A }_{M}(K)$, 
$M$ is an indecomposable ${\A }_{M}(K)$-module.
Therefore, 
${\A }_{M}(K)$ is local since 
${\A }_{M}(K)$ is commutative.
Thus, 
${\A }_{M}(K)=\C\id\oplus {\mathcal J}_{M}(K)$
and hence
\begin{align}
\label{eq:restriction}
\psi[{K,(M,\tilde{Y}_{M})}]|_{K^{G}}&=\psi[{K^G,(M,Y_{M})}].
\end{align}
Symbol $Q$ denotes the image of the homomorphism $\psi[{K^G,(M,Y_{M})}] : K^G\rightarrow \C((x))$.
For a polynomial $F(Z)\in K^{G}[Z]$,
$\hat{F}(Z)$ denotes
the image of $F(Z)$ under the map $Y_{M}(\cdot,x) : 
K^{G}[Z]\rightarrow ({\mathcal A}_{M}(K^G)((x)))[Z]$
and $\hat{F}^{[0]}(Z)$ denotes
the image of $F(Z)$ under the map $\psi[K^G,(M,Y_{M})] :
K^{G}[Z]\rightarrow Q[Z]\subset \C((x))[Z]$.
We write 
\begin{align*}
\hat{F}(Z)
&=\sum_{i\geq 0}\hat{F}_i(x)Z^i, \hat{F}_i(x)\in
{\mathcal A}_{M}(K^G)((x))
\mbox{\quad and }\\
\hat{F}^{[0]}(Z)&=\sum_{i\geq 0}\hat{F}_{i}(x)^{[0]}Z^i,\ 
\hat{F}_{i}(x)^{[0]}\in Q.
\end{align*}
Since $P(Z)$ is an irreducible polynomial over $K^G$,
so is $\hat{P}^{[0]}(Z)$ over $Q$.

Now we state our main theorem.

\begin{theorem}\label{theorem:correspondence}
Let $K$ be a differential field over $\C$ with derivation $D$,
$G$ a finite linear automorphism group of $K$ which preserves $D$,
and $(M,Y_{M})$ a non-zero finite-dimensional indecomposable vertex algebra $(K^G,D)$-module.
Then, we have the following results:
\begin{enumerate}

\item
$M$ has a $g$-twisted vertex algebra $(K,D)$-module structure over $(M,Y_M)$ for some $g\in G$.

\item 
Let $g$ be an element of $G$ and
let $(M,\tilde{Y}^{1}_{M}), (M,\tilde{Y}^{2}_{M})$ be two
$g$-twisted vertex algebra $(K,D)$-module structures over $(M,Y_M)$
such that $\psi[{K,(M,\tilde{Y}^{1}_{M})}]=\psi[{K,(M,\tilde{Y}^{2}_{M})}]$.
Then, $(M,\tilde{Y}^{1}_{M})\cong(M,\tilde{Y}^{2}_{M})$ as 
$g$-twisted vertex algebra $(K,D)$-modules.

\item Let $g$ be an element of $G$ and
let $(M,\tilde{Y}_{M})$  be a 
$g$-twisted vertex algebra $(K,D)$-module structure over $(M,Y_M)$.
Then, $\tilde{Y}_{M}\circ h, h\in G,$ are  all distinct 
homomorphisms from $K$ to $(\End_{\C}M)((x^{1/|g|}))$. 

\item
For $k=1,2$, let $g_k\in G$ of order $p_k$
and let $(M,\tilde{Y}^{k}_{M})$  be a
$g_k$-twisted vertex algebra $(K,D)$-module structure over $(M,Y_M)$.
Then, $(M,\tilde{Y}^{1}_{M})\circ h\cong (M,\tilde{Y}^{2}_{M})$ for some $h\in G$.
\end{enumerate}
\end{theorem}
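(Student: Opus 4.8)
My plan is to prove the four assertions in turn, reducing (2)--(4) to a Hensel-lifting analysis in one commutative local ring. Write $\psi=\psi[K^{G},(M,Y_{M})]\colon K^{G}\xrightarrow{\sim}Q\subseteq\C((x))$ for the injective semisimple-part homomorphism. Since $M$ is indecomposable, $\A_{M}(K^{G})=\C\id\oplus\mathcal{J}_{M}(K^{G})$ is local with nilpotent radical, so $\A_{M}(K^{G})((x^{1/p}))$ is a commutative Henselian local ring with nilpotent maximal ideal and residue field $\C((x^{1/p}))$. The coefficients of $\hat{P}(Z)$ lie in $\A_{M}(K^{G})((x))$ and reduce, modulo $\mathcal{J}_{M}(K^{G})$, to $\hat{P}^{[0]}(Z)$, which is irreducible and hence (in characteristic $0$) separable over $Q$. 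The strategy is to realize $K$ as a differential subfield of some $\C((x^{1/p}))$ and lift the corresponding root of $\hat{P}^{[0]}$ to a root of $\hat{P}$ by Hensel's lemma.

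For (1) I first extend $\psi$ to a differential embedding $\hat\psi\colon K\hookrightarrow\Omega$. As $K/K^{G}$ is finite separable, $D$ extends uniquely to $K$, with $D\theta=-P'(\theta)^{-1}\sum_{i}(DP_{i})\theta^{i}$; since $\Omega$ is algebraically closed, $\hat{P}^{[0]}$ has a root $\hat\theta_{0}$ there, and $\hat\psi(K)=Q(\hat\theta_{0})=:\hat{K}$ is the splitting field of $\hat{P}^{[0]}$ over $Q$ inside $\Omega$. One checks that $d/dx$ stabilizes $\hat{K}$ and restricts to $\hat\psi_{*}D$, so $\hat\psi$ is a differential isomorphism. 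Taking $p$ minimal with $\hat\theta_{0}\in\C((x^{1/p}))$, the substitution automorphism $\sigma_{p}\colon x^{1/p}\mapsto\zeta_{p}x^{1/p}$ fixes $Q$ and stabilizes the normal extension $\hat{K}$, hence corresponds via $\hat\psi$ to an element $g\in G$ of order $p$. I then let $T\in\A_{M}(K^{G})((x^{1/p}))$ be the unique Hensel lift of the simple root $\hat\theta_{0}$ of $\hat{P}$, set $\tilde{Y}_{M}(\theta,x)=T$, and extend to a $\C$-algebra homomorphism $\tilde{Y}_{M}\colon K\to\A_{M}(K^{G})((x^{1/p}))$ with $\tilde{Y}_{M}|_{K^{G}}=Y_{M}$. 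To apply Lemma \ref{lemma:comm-module}: conditions (1)--(4) hold because the image is commutative, contains $\id$, and $\tilde{Y}_{M}$ is multiplicative; condition (5) follows by differentiating $\hat{P}(T)=0$ and using $dY_{M}(P_{i},x)/dx=Y_{M}(DP_{i},x)$ with the formula for $D\theta$; and the grading hypothesis follows from $\sigma_{p}$-equivariance, since $\sigma_{p}$ fixes the coefficients of $\hat{P}$, so $\sigma_{p}(T)$ is again the Hensel lift of $\sigma_{p}(\hat\theta_{0})=\hat\psi(g^{-1}\theta)$, giving $\sigma_{p}(\tilde{Y}_{M}(u,x))=\tilde{Y}_{M}(g^{-1}u,x)$.

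Assertions (2) and (3) then follow quickly. For (2), both $\tilde{Y}^{1}_{M}(\theta,x)$ and $\tilde{Y}^{2}_{M}(\theta,x)$ are roots of $\hat{P}$ reducing to the common simple root $\hat\theta_{0}$; since $\hat\theta_{0}\in\C((x^{1/p}))$, adjoining either of them to $\A_{M}(K^{G})((x^{1/p}))$ gives a commutative local ring with residue field $\C((x^{1/p}))$, so Hensel uniqueness forces $\tilde{Y}^{1}_{M}(\theta,x)=\tilde{Y}^{2}_{M}(\theta,x)=T$ and hence $\tilde{Y}^{1}_{M}=\tilde{Y}^{2}_{M}$, the identity being the required isomorphism. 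For (3), $\tilde{Y}_{M}$ is injective because its semisimple part is a nonzero homomorphism out of the field $K$; thus $\tilde{Y}_{M}\circ h_{1}=\tilde{Y}_{M}\circ h_{2}$ gives $h_{1}a=h_{2}a$ for all $a\in K$, so $h_{1}=h_{2}$ and the $N$ maps $\tilde{Y}_{M}\circ h$ are pairwise distinct.

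For (4), both $\psi[K,(M,\tilde{Y}^{1}_{M})]$ and $\psi[K,(M,\tilde{Y}^{2}_{M})]$ extend $\psi$ and send $\theta$ to a root of $\hat{P}^{[0]}$, so each has image the splitting field $\hat{K}$ in $\Omega$; they therefore differ by an element $h\in\Gal(K/K^{G})=G$. Then $(M,\tilde{Y}^{1}_{M})\circ h$ and $(M,\tilde{Y}^{2}_{M})$ have equal semisimple parts, and since the semisimple part determines the grading and hence the twisting automorphism, one gets $h^{-1}g_{1}h=g_{2}$; assertion (2) now yields $(M,\tilde{Y}^{1}_{M})\circ h\cong(M,\tilde{Y}^{2}_{M})$. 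The main obstacle is the existence step (1), and within it the matching of ramification to the group: one must pick $p$ and $g$ so that the substitution automorphism $\sigma_{p}$ corresponds to $g$ while $\hat\psi$ realizes $K$ as a differential subfield of $\C((x^{1/p}))$, and then show the single Hensel lift $T$ satisfies both the derivation condition (5) and the twisting/grading condition. Establishing that $\sigma_{p}$-equivariance of $T$ is precisely the grading requirement of Lemma \ref{lemma:comm-module}, and that the lift automatically respects $d/dx$, is the technical heart of the argument.
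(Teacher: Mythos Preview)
Your proof is correct and follows the same overall strategy as the paper: embed $K^{G}$ into $\C((x))$ via the semisimple part $\psi$, realize $K$ as the splitting field $\hat{K}\subset\Omega$ of $\hat{P}^{[0]}$, lift a root $\hat{\theta}_{0}$ of $\hat{P}^{[0]}$ to a root $T$ of $\hat{P}$ in the local ring $\A_{M}(K^{G})((x^{1/p}))$, and verify the conditions of Lemma~\ref{lemma:comm-module}. The derivation argument (differentiating $\hat{P}(T)=0$) and the arguments for (3) and (4) are essentially identical to the paper's.

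Where you differ is in execution rather than strategy. First, you invoke Hensel's lemma abstractly for both existence and uniqueness of $T$, whereas the paper writes out the Newton iteration~\eqref{eqn:Tk} explicitly and redoes it for~(2); your formulation is cleaner and immediately gives~(2) in one line. Second, and more substantively, your verification of the grading condition via $\sigma_{p}$-equivariance is genuinely different from the paper's argument. The paper proves that $\tilde{Y}_{M}(b,x)\in x^{-i/p}(\End_{\C}M)((x))$ for $b\in K^{(g;i)}$ by a two-step induction on matrix superdiagonals (first treating elements of $K^{\langle g\rangle}$ via their minimal polynomials, then general $g$-eigenvectors via $b^{p}\in K^{\langle g\rangle}$), which is rather laborious. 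Your observation that $\sigma_{p}$ fixes the coefficients of $\hat{P}$ and preserves the radical, so that $\sigma_{p}(T)$ is the unique Hensel lift of $\sigma_{p}(\hat{\theta}_{0})=\hat{\psi}(g^{-1}\theta)$ and hence $\sigma_{p}\circ\tilde{Y}_{M}=\tilde{Y}_{M}\circ g^{-1}$, yields the grading in one stroke. This is a real simplification. You also make explicit in (4) that the semisimple part determines the twisting automorphism (so $h^{-1}g_{1}h=g_{2}$), a point the paper uses tacitly when applying (2).
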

\begin{proof}
Set $n=\dim_{\C}M$ and $|G|=N$.
Let the notation be as above.
Since $K^G$ is a field, 
$\psi[{K^G,(M,Y_{M})}]$ is an injective homomorphism from $K^G$ to the field $\C((x))$
and hence $K^G\cong Q$ as $\C$-algebras.
It is well known that any finite extension of $\C((x))$ is $\C((x^{1/j}))$ for some
positive integer $j$ and $\Omega=\cup_{j=1}^{\infty}\C((x^{1/j}))$ is the algebraic closure
of $\C((x))$ (cf. \cite[Corollary 13.15]{E}). 
The field $\C((x^{1/j}))$ becomes a Galois extension of $\C((x))$ whose
Galois group is the cyclic group
generated by the automorphism sending $x^{1/j}$ to $\zeta_jx^{1/j}$. 
Let $\hat{K}$ denote 
the splitting field  for $\hat{P}^{[0]}(Z)\in Q[Z]$ in $\Omega$
and let $\Se$ denote the set of all $\C$-algebra isomorphisms from $K$ to $\hat{K}$
whose restrictions to ${K^G}$ are equal to $\psi[K^G,(M,Y_M)]$.
Since $\theta$ is a primitive element of $K$ over $K^G$,
any map of ${\mathcal S}$ is uniquely determined by the image of $\theta$ under that map.
Since $K$ is the splitting field for $P(Z)\in K^G[Z]$, 
Galois theory implies that $|\Se|=N$ and
for any two $\phi_1, \phi_2\in \Se$ 
there exists $h\in G$ such that $\phi_1(ha)=\phi_2(a)$ for all $a\in K$.

Let $g$ be an element of $G$ and 
let $(M,\tilde{Y}_{M})$ be a $g$-twisted vertex algebra $(K,D)$-module structure over $(M,Y_M)$.
By \eqref{eq:restriction}, for all root $a\in K$ of the polynomial $P(Z)$ we have
\begin{align*}
0&=\psi[K,(M,\tilde{Y}_{M})](P(a))=\hat{P}^{[0]}(\psi[K,(M,\tilde{Y}_{M})](a)).
\end{align*}
Since $\hat{K}$ is the splitting field for $\hat{P}^{[0]}(Z)\in Q[Z]$ in $\Omega$,
the image of $K$ under the map $\psi[K,(M,\tilde{Y}_{M})]$
is equal to $\hat{K}$
and hence $\psi[K,(M,\tilde{Y}_{M})]$
is an element of $\Se$.

We identify $G$ with $\Gal(\hat{K}/Q)$ throughout the argument below.

\begin{enumerate}
\item[(1)]
Let $\phi$ be an arbitrary element of $\Se$.
We use Lemma \ref{lemma:comm-module} by taking ${\mathcal B}=K^{G}\cup\{\theta\}$.
We shall construct a $\C$-algebra homomorphism $\tilde{Y}_M(\cdot,x)$ from $K\cong K^{G}[Z]/(P(Z))$ to 
$(\End_{\C}M)((x^{1/N}))$.
We first find $g\in G$ such that for all $b\in K$ with $gb=\zeta_{p}^ib,i\in\Z$,
$\phi(b)$ is an element of $x^{-i/p}\C((x))$.  
Since $K$ is a finite extension of $K^G$ and 
$Q$ is a subfield of $\C((x))$,
$Q$ is a subfield of $\hat{K}\cap \C((x))$ and $\hat{K}\C((x))=\C((x^{1/p}))$ for some positive integer $p$. 
The isomorphism 
\begin{align}
\label{eq:galois}
\Gal(\C((x^{1/p}))/\C((x)))\ni\sigma\mapsto \sigma|_{\hat{K}}\in \Gal(\hat{K}/\hat{K}\cap\C((x)))
\end{align}
implies that $\Gal(\hat{K}/\hat{K}\cap\C((x)))$ is the cyclic group of order $p$.
Let $g\in G$ be a generator of $\Gal(\hat{K}/\hat{K}\cap\C((x)))$ which 
is the homomorphic image of the element in $\Gal(\C((x^{1/p}))/\C((x)))$ sending 
$x^{1/p}$ to $\zeta_{p}x^{1/p}$ under the isomorphism \eqref{eq:galois}.
We have the eigenspace decomposition $K=\oplus_{j=0}^{p-1}K^{(g;j)}$ for $g$ 
where $K^{(g;j)}=\{a\in K\ |\ ga=\zeta_{p}^{j}a\}$.
For $a\in K^{(g;i)}$ and $b\in K^{(g;j)}, 1\leq i,j\leq p-1$,
taking two integers $i_0,j_0$ such that $i_0i+j_0j=(i,j)$,
we have $g(a^{i_0}b^{j_0})=\zeta_{p}^{(i,j)}a^{i_0}b^{j_0}$
where $(i,j)$ is the greatest common divisor of $i$ and $j$.
This implies that we can take a nonzero element $a$ of $K^{(g;1)}$ since the order of $g$ is equal to $p$.
We fix a nonzero element $a$ of $K^{(g;1)}$.
It follows from $\phi(a^{p})\in \hat{K}^{\langle g\rangle}=\hat{K}\cap\C((x))$ that
$\phi(a)$ is a root of the polynomial 
$Z^p-\phi(a^p)\in \C((x))[Z]$.
Thus, $\phi(a)$ is an element of $x^{-r/p}\C((x))$ for some integer $r$.
For all $i=1,\ldots,p-1$, it follows by $a^i\not\in K^{\langle g\rangle}$ 
that $\phi(a^i)\not\in \hat{K}^{\langle g\rangle}=\hat{K}\cap\C((x))$. 
Therefore, we have $(r,p)=1$.
Taking two integers $\gamma,\delta$ such that $\gamma r+\delta p=1$ and
replacing $a$ by $a^{\gamma}$,
we have $g a=\zeta_{p}^{\gamma }a$ and  $\phi(a)\in x^{-1/p}\C((x))$.
Since $(\gamma ,p)=1$, by replacing $g$ by a suitable power of $g$,
we have $g a=\zeta_{p}a$ and  $\phi(a)\in x^{-1/p}\C((x))$.
For all $b\in K$ with $gb=\zeta_{p}^{i}b$,
since $a^{-i}b$ is an element of $K^{\langle g\rangle}=\hat{K}\cap\C((x))$, we have $\phi(b)$
is an element of $x^{-i/p}\C((x))$.

Set $T(x)^{[0]}=\phi(\theta)\in \hat{K}$, which 
is a root of $\hat{P}^{[0]}(Z)\in Q[Z]$.
Since 
$\hat{P}^{[0]}(Z)$ is the image of $P(Z)$ under the map $\psi[K^G,(M,Y_{M})]$,
it is a separable polynomial and hence
$(d\hat{P}^{[0]}/dZ)(T(x)^{[0]})$ is not zero.
We set 
\begin{align*}
\hat{P}_i(x)^{[1]}&=\hat{P}_{i}(x)-\hat{P}_{i}(x)^{[0]}\in {\mathcal J}_{M}(K^G)((x))
\mbox{\quad and }\\
\hat{P}_{i}(x)^{[k]}&=0\in {\mathcal J}_{M}(K^G)^{k}((x))
\end{align*}
for all $i=0,\ldots, N$ and $k=2,3,\ldots$ for convenience.
For $k=1,2,\ldots,n-1$ we inductively define $T(x)^{[k]}\in {\mathcal J}_{M}(K^G)^k((x^{1/p}))$ 
by
\begin{align}
T(x)^{[k]}
&=-(\dfrac{d\hat{P}^{[0]}}{dZ}(T(x)^{[0]}))^{-1}\nonumber\\
&\quad\times
\sum_{i=0}^{N}\sum_{j_0=0}^{k}\sum_{
\begin{subarray}{l}
0\leq j_1,\ldots,j_i<k\\
j_0+j_1+\cdots+j_i=k\end{subarray}}
\hat{P}_i(x)^{[j_0]}T(x)^{[j_1]}\cdots T(x)^{[j_i]}\label{eqn:Tk}.
\end{align}
Set $T(x)=\sum_{k=0}^{n-1}T(x)^{[k]}\in{\A }_{M}(K^G)((x^{1/p}))$. 
It follows from the definition of $T(x)$ that $T(x)^{[0]}$
is exactly the semisimple part of $T(x)$ defined arter \eqref{eq:proj}.
Since ${\mathcal J}_{M}(K^G)^{n}((x))=0$, we have
\begin{align}
\label{eq:Tx}
\hat{P}(T(x))&
=\sum_{i=0}^{N}
\hat{P}_i(x)(T(x))^i\nonumber\\
&=\sum_{i=0}^{N}(\sum_{j=0}^{n-1}
\hat{P}_i(x)^{[j]})(\sum_{k=0}^{n-1}T(x)^{[k]})^i\nonumber\\
&=\sum_{k=0}^{n-1}\sum_{i=0}^{N}\sum_{
\begin{subarray}{l}
0\leq j_0,j_1,\ldots,j_i\leq n\\
j_0+j_1+\cdots+j_i=k\end{subarray}}
\hat{P}_i(x)^{[j_0]}T(x)^{[j_1]}\cdots T(x)^{[j_i]}\nonumber\\
&=
\hat{P}^{[0]}(T(x)^{[0]})\nonumber\\
&\quad{}+
\sum_{k=1}^{n-1}\sum_{i=0}^{N}\sum_{
\begin{subarray}{l}
0\leq j_0,j_1,\ldots,j_i\leq k\\
j_0+j_1+\cdots+j_i=k\end{subarray}}
\hat{P}_i(x)^{[j_0]}T(x)^{[j_1]}\cdots T(x)^{[j_i]}\nonumber\\
&=0+
\sum_{k=1}^{n-1}
\Big(T(x)^{[k]}\dfrac{d\hat{P}^{[0]}}{dZ}(T(x)^{[0]})\nonumber\\
&\quad{}+
\sum_{i=0}^{N}\sum_{j_0=0}^{k}\sum_{
\begin{subarray}{l}
0\leq j_1,\ldots,j_i<k\\
j_0+j_1+\cdots+j_i=k\end{subarray}}
\hat{P}_i(x)^{[j_0]}T(x)^{[j_1]}\cdots T(x)^{[j_i]}\Big)\nonumber\\
&=0.
\end{align}
Thus, we have an injective homomorphism $\tilde{Y}_{M}(\cdot,x)$ from 
$K\cong K^{G}[Z]/(P(Z))$ to ${\A }_{M}(K^G)((x^{1/p}))$ sending $\theta$ to $T(x)$.
Since $T(x)$ is an element of ${\A }_{M}(K^G)((x^{1/p}))$, the $\C$-algebra
${\A }_{M}(K)$ for $\tilde{Y}_{M}(\cdot,x)$ is 
equal to ${\A }_{M}(K^G)$.
In particular, ${\A }_{M}(K)$ is a commutative $\C$-algebra. 
Moreover, it follows from $\phi(\theta)=T(x)^{[0]}$ that $\psi[K,(\tilde{Y}_{M},M)]=\phi$.

Let $b$ be an element of $K$ with the minimal polynomial $R(Z)\in K^{G}[Z]$ over $K^G$
and let $B(x)$ denotes $\tilde{Y}_{M}(b,x)$.
We write $R(Z)=\sum_{i=0}^{m}R_{i}Z^i\in K^{G}[Z], R_i\in K^G$.
We shall show that if $B(x)^{[0]}$ is an element of $\C((x))\id$, then
$B(x)$ is an element of $(\End_{\C}M)((x))$.
By definition, $\hat{R}(B(x))=0$ in $(\End_{\C}M)((x^{1/p}))$ and 
$\hat{R}^{[0]}(B(x)^{[0]})=0$ in $\Omega\id$.
We identify $\End_{\C}M$ with $\Mat_{n}(\C)$
by fixing  a basis of $M$ so that the representation  matrix of each element of ${\A }_{M}(K^G)$
with respect to the basis
is an upper triangular matrix. 
We use the expansion $B(x)=\sum_{k=0}^{n-1}B(x)^{(k)},
B(x)^{(k)}\in \Delta_{k}(\C((x^{1/p})))$.
We recall $B(x)^{[0]}=B(x)^{(0)}$ and
$\hat{R}_i(x)^{[0]}=\hat{R}_i(x)^{(0)}, i=0,\ldots,m$.
It may be possible that $B(x)^{(k)}$ and $\hat{R}_i(x)^{(k)}$ are not elements of $\A_{M}(K^G)((x^{1/p}))$ for $k=1,\ldots,n-1$.
Since $B(x)^{(0)}$ and $\hat{R}_i(x)^{(0)}$ are elements of 
$\C((x^{1/p}))E_n$, they commute any element of $(\End_{\C}M)((x^{1/p}))$.
We denote the diagonal part of $\hat{R}(Z)$ by $\hat{R}^{(0)}(Z)$, namely,
$\hat{R}^{(0)}(Z)=\sum_{i=0}^{m}\hat{R}_{i}(x)^{(0)}Z^i$.
Under the identification of $\End_{\C}M$ with $\Mat_{n}(\C)$ above,
$\hat{R}^{(0)}(Z)$ is equal to $\hat{R}^{[0]}(Z)$.
The same computation as \eqref{eq:Tx} shows
\begin{align}
\label{eq:Bx}
0&=\hat{R}(B(x))
=\sum_{i=0}^{m}
\hat{R}_i(x)(B(x))^i\nonumber\\
&=\sum_{i=0}^{m}(\sum_{j=0}^{n-1}
\hat{R}_i(x)^{(j)})(\sum_{k=0}^{n-1}B(x)^{(k)})^i\nonumber\\
&=\sum_{k=0}^{n-1}\sum_{i=0}^{m}\sum_{
\begin{subarray}{l}
0\leq j_0,j_1,\ldots,j_i\leq n\\
j_0+j_1+\cdots+j_i=k\end{subarray}}
\hat{R}_i(x)^{(j_0)}B(x)^{(j_1)}\cdots B(x)^{(j_i)}\nonumber\\
&=
\hat{R}^{(0)}(B(x)^{(0)})\nonumber\\
&\quad{}+
\sum_{k=1}^{n-1}\sum_{i=0}^{m}\sum_{
\begin{subarray}{l}
0\leq j_0,j_1,\ldots,j_i\leq k\\
j_0+j_1+\cdots+j_i=k\end{subarray}}
\hat{R}_i(x)^{(j_0)}B(x)^{(j_1)}\cdots B(x)^{(j_i)}\nonumber\\
&=\sum_{k=1}^{n-1}
\Big(B(x)^{(k)}\dfrac{d\hat{R}^{(0)}}{dZ}(B(x)^{(0)})\nonumber\\
&\quad{}+
\sum_{i=0}^{m}\sum_{j_0=0}^{k}\sum_{
\begin{subarray}{l}
0\leq j_1,\ldots,j_i<k\\
j_0+j_1+\cdots+j_i=k\end{subarray}}
\hat{R}_i(x)^{(j_0)}B(x)^{(j_1)}\cdots B(x)^{(j_i)}\Big).
\end{align}
Since 
$\hat{R}^{(0)}(Z)\in Q[Z]$ is the image of $R(Z)$ under the map $\psi[K^G,(M,Y_{M})]$,
$\hat{R}^{(0)}(Z)$ is a separable polynomial and hence
$(d\hat{R}^{(0)}/dZ)(B(x)^{(0)})$ is not zero.
Thus, we have
\begin{align*}
B(x)^{(k)}
&=-(\dfrac{d\hat{R}^{(0)}}{dZ}(B(x)^{(0)}))^{-1}\\
&\quad\times
\sum_{i=0}^{m}\sum_{j_0=0}^{k}\sum_{
\begin{subarray}{l}
0\leq j_1,\ldots,j_i<k\\
j_0+j_1+\cdots+j_i=k\end{subarray}}
\hat{R}_i(x)^{(j_0)}B(x)^{(j_1)}\cdots B(x)^{(j_i)}.
\end{align*}
Since $B(x)^{(0)}=\phi(b)\in\C((x))E_n$ and 
$\hat{R}_i(x)\in (\End_{\C}M)((x))$, it follows by induction on $k$ that
$B(x)^{(k)}$ is an element of $(\End_{\C}M)((x))$.
We conclude that $B(x)$ is an element of $(\End_{\C}M)((x))$.

For all $b\in K$ with $gb=\zeta_{p}^{i}b$,
we shall show that
$\tilde{Y}_{M}(b,x)$ is an element of $x^{-i/p}(\End_{\C}M)((x))$.
Set $B(x)=\tilde{Y}_{M}(b,x)$ and $C(x)=B(x)^{p}$.
It follows from $b^p\in K^{\langle g\rangle}$ that
$C(x)^{[0]}=(B(x)^{[0]})^{p}$ is an element of $\hat{K}^{\langle g\rangle}
=\hat{K}\cap \C((x))$. 
The argument above shows $C(x)$ is an element of $(\End_{\C}M)((x))$.
We identify $\End_{\C}M$ with $\Mat_{n}(\C)$
by fixing  a basis of $M$ so that the representation  matrix of each element of ${\A }_{M}(K^G)$
with respect to the basis
is an upper triangular matrix. 
We use the expansion $B(x)=\sum_{k=0}^{n-1}B(x)^{(k)},
B(x)^{(k)}\in \Delta_{k}(\C((x^{1/p})))$.
We have already seen in the first part of the proof of (1) that $B(x)^{(0)}=\phi(b)$ is an element of $x^{-i/p}(\End_{\C}M)((x))$.
By $C(x)=B(x)^p$, the same computation as \eqref{eq:Bx}
shows
\begin{align*}
B(x)^{(k)}
&=-p^{-1}(B(x)^{(0)})^{-p+1}\nonumber\\
&\quad\times (C(x)^{(k)}+
\sum_{
\begin{subarray}{l}
0\leq j_1,\ldots,j_{p}<k\\
j_1+\cdots+j_{p}=k\end{subarray}}
B(x)^{(j_1)}\cdots B(x)^{(j_p)}).
\end{align*} 
for all $k=1,\ldots,n-1$.
It follows by induction on $k$ that
$B(x)^{(k)}$ is an element of $x^{-i/p}(\End_{\C}M)((x))$. 
We conclude that $B(x)$ is an element of $x^{-i/p}(\End_{\C}M)((x))$.

We shall show $\tilde{Y}_{M}(D\theta,x)=d\tilde{Y}_{M}(\theta,x)/dx$.
It follows from $P(\theta)=0$
that 
\begin{align*}
0&=D(P(\theta))=\sum_{i=0}^{N}(DP_i)\theta^i+\big(\dfrac{dP}{dZ}(\theta)\big)(D\theta).
\end{align*}
and hence
\begin{align}
\label{eq:d-1}
0&=\sum_{i=0}^{N}\tilde{Y}_{M}(DP_i,x)\tilde{Y}_{M}(\theta,x)^i+
\tilde{Y}_{M}(\dfrac{dP}{dZ}(\theta),x)\tilde{Y}_{M}(D\theta,x).
\end{align}
We also have
\begin{align}
\label{eq:d-2}
0&=\dfrac{d}{dx}\tilde{Y}_{M}(P(\theta),x)\nonumber\\
&=\dfrac{d}{dx}\sum_{i=0}^{N}\tilde{Y}_{M}(P_i,x)\tilde{Y}_{M}(\theta,x)^i\nonumber\\
&=\sum_{i=0}^{N}\dfrac{d\tilde{Y}_{M}(P_i,x)}{dx}\tilde{Y}_{M}(\theta,x)^i+
\sum_{i=0}^{N}\tilde{Y}_{M}(P_i,x)i\tilde{Y}_{M}(\theta,x)^{i-1}\dfrac{\tilde{Y}_{M}(\theta,x)}{dx}
\nonumber\\
&=\sum_{i=0}^{N}\dfrac{dY_{M}(P_i,x)}{dx}\tilde{Y}_{M}(\theta,x)^i+
\tilde{Y}_{M}(\dfrac{dP}{dZ}(\theta),x)\tilde{Y}_{M}\dfrac{\tilde{Y}_{M}(\theta,x)}{dx}.
\end{align}
Since $P(Z)\in K^{G}[Z]$ is a separable polynomial in $Z$,
$(dP/dZ)(\theta)$ is not zero and hence
$\tilde{Y}_{M}((dP/dZ)(\theta),x)$ is an invertible element of $(\End_{\C}M)((x^{1/p}))$.
Since $Y_{M}(DP_i,x)=d{Y}_{M}(P_i,x)/dx$ for all $i$,
we have $\tilde{Y}_{M}(D\theta,x)=d\tilde{Y}_{M}(\theta,x)/dx$
by \eqref{eq:d-1} and \eqref{eq:d-2}.

By Lemma \ref{lemma:comm-module}, we conclude that 
$(M,\tilde{Y}_{M})$ is a $g$-twisted vertex algebra $(K,D)$-module structure over 
$(M,Y_M)$.

\item[(2)]
We denote the order of $g$ by $p$
and $\psi[{K,(M,\tilde{Y}^{1}_{M})}]=\psi[{K,(M,\tilde{Y}^{2}_{M})}]$ 
by $\Psi$ for simplicity.
We recall $\Psi$ is an element of $\Se$ as mentioned before starting the proof of (1).

Let $(M,\tilde{Y}_{M})$ be a
$g$-twisted vertex algebra $(K,D)$-module structure over $(M,Y_M)$ constructed in (1) by 
taking $\Psi$ as $\phi$.
It is enough to show that $\tilde{Y}_{M}=\tilde{Y}^{1}_{M}$.
We denote ${\A }_{M}(K)$ for $(M,\tilde{Y}_{M})$
by ${\A }^{0}$ and 
${\A }_{M}(K)$ for $(M,\tilde{Y}^{1}_{M})$
by ${\A }^1$.
We have seen in (1) that ${\A }^{0}={\A }_{M}(K^G)$.
Thus, ${\A }^{0}$ is a subalgebra of ${\A }^1$.
We identify $\End_{\C}M$ with $\Mat_{n}(\C)$
by fixing a basis of $M$ so that the representation  matrix of each element of $\A^1$ with respect to the basis
is an upper triangular matrix. 
We denote $\tilde{Y}^{1}_{M}(\theta,x)$ by $U(x)$.
We use the expansions 
\begin{align*}
U(x)&=\sum_{k=0}^{n-1}U(x)^{(k)},\ U(x)^{(k)}\in \Delta_{k}(\C((x^{1/p})))
\mbox{ and }\\
\hat{P}_i(x)&=\sum_{k=0}^{n-1}\hat{P}_i(x)^{(k)},
\hat{P}_i(x)^{(k)}\in \Delta_{k}(\C((x^{1/p}))).
\end{align*}
We denote the diagonal part of $\hat{P}(Z)$ by $\hat{P}^{(0)}(Z)$, namely,
$\hat{P}^{(0)}(Z)=\sum_{i=0}^{N}\hat{P}_{i}(x)^{(0)}Z^i$.
Under the identification of $\End_{\C}M$ with $\Mat_{n}(\C)$ above,
$\hat{P}^{(0)}(Z)$ is equal to $\hat{P}^{[0]}(Z)$.
Note that 
we do not assume $U(x)\in {\A }_{M}(K^G)((x^{1/p}))$.
We recall $\Psi(\theta)=U(x)^{(0)}$ by definition.
We have $\hat{P}^{(0)}(U(x)^{(0)})=\Psi(P(\theta))=0$ and 
$(d\hat{P}^{(0)}/dZ)(U(x)^{(0)})$ is not zero
since $\hat{P}^{[0]}(Z)$ is a separable polynomial.
The same computation as \eqref{eq:Bx}
shows
\begin{align*}
0&=\hat{P}(U(x))\\
&=\hat{P}^{(0)}(U(x)^{(0)})\\
&\quad{}+
\sum_{k=1}^{n-1}\sum_{i=0}^{N}\sum_{
\begin{subarray}{l}
0\leq j_0,j_1,\ldots,j_i\leq k\\
j_0+j_1+\cdots+j_i=k\end{subarray}}
\hat{P}_i(x)^{(j_0)}U(x)^{(j_1)}\cdots U(x)^{(j_i)}\\
&=\sum_{k=1}^{n-1}\Big(
U(x)^{(k)}\dfrac{d\hat{P}^{(0)}}{dZ}(U(x)^{(0)})\\
&\quad{}+
\sum_{i=0}^{N}\sum_{j_0=0}^{k}\sum_{
\begin{subarray}{l}
0\leq j_1,\ldots,j_i<k\\
j_0+j_1+\cdots+j_i=k\end{subarray}}
\hat{P}_i(x)^{(j_0)}U(x)^{(j_1)}\cdots U(x)^{(j_i)}\Big)
\end{align*}
and hence
\begin{align*}
U(x)^{(k)}
&=-(\dfrac{d\hat{P}^{[0]}}{dZ}(\Psi(\theta))^{-1}\\
&\quad\times
\sum_{i=0}^{N}\sum_{j_0=0}^{k}\sum_{
\begin{subarray}{l}
0\leq j_1,\ldots,j_i<k\\
j_0+j_1+\cdots+j_i=k\end{subarray}}
\hat{P}_i(x)^{(j_0)}U(x)^{(j_1)}\cdots U(x)^{(j_i)}.
\end{align*}
for all $k=1,\ldots,n-1$.
It follows by induction on $k$ that
$U(x)=\sum_{k=0}^{n-1}U(x)^{(k)}$ is uniquely determined by $\Psi(\theta)$ and $\hat{P}(Z)$. 
By definition of $\Psi$, we have
$\tilde{Y}_{M}(\theta,x)=U(x)=\tilde{Y}^{1}_{M}(\theta,x)$
and hence $\tilde{Y}_{M}=\tilde{Y}^{1}_{M}$.

\item[(3)]
Let $h\in G$ with $h\neq 1$.
Since $h^{-1}(\theta)\neq \theta$ and 
$\tilde{Y}_{M\circ h}(h^{-1}\theta,x)=\tilde{Y}_{M}(\theta,x)$,
$\tilde{Y}_{M\circ h}(\cdot,x)$ is distinct from $\tilde{Y}_{M}(\cdot ,x)$.
This implies that 
$\tilde{Y}_{M}\circ h, h\in G,$ are  all distinct 
homomorphisms from $K$ to $(\End_{\C}M)((x^{1/|g|}))$. 
\item[(4)]
For $k=1,2$, let $g_k\in G$ of order $p_k$
and let $(M,\tilde{Y}^{k}_{M})$  be a
$g_k$-twisted vertex algebra $(K,D)$-module structure over $(M,Y_M)$.
We denote $\psi[K,(M,\tilde{Y}^{k}_{M})]$ by $\psi_k$ and 
$\psi[{K^G,(M,Y_M)}]$ by $\psi$ briefly.

Since $\psi_1,\psi_2\in\Se$, there exists $h\in G$ such that
$\psi_1(ha)=\psi_2(a)$ for all $a\in K$.
By definition of $(M,\tilde{Y}^{1}_{M})\circ h$,
we have $\psi[K,(M,\tilde{Y}^{1}_{M})\circ h](a)=\psi_1(ha)=\psi_2(a)$ for all $a\in K$.
We conclude that $(M,\tilde{Y}^{1}_{M})\circ h\cong (M,\tilde{Y}^{2}_{M})$ by (2).
\end{enumerate}
\end{proof}

\section{Finite-dimensional vertex algebra $\C(s)$-modules}

Throughout the rest of this paper,
$\C(s)$ is the field of rational functions in one variable $s$. 
In this section we classify the finite-dimensional vertex algebra $\C(s)$-modules.
We use the notation introduced in Section 3.
It is easy to see that 
every non-zero derivation $D$ of $\C(s)$ can be expressed as $D=(p(s)/q(s))d/ds$, 
where $p(s)$ and $q(s)$ are non-zero coprime elements of $\C[s]$.
We write 
\begin{align*}
p(s)=\sum_{i=L_p}^{N_p}p_is^i
\mbox{\quad and\quad }
q(s)=\sum_{i=L_q}^{N_q}q_is^i
\end{align*}
where $p_{L_p},p_{N_p},q_{L_q},q_{N_q}$ are all non-zero
complex numbers.

The following lemma is a corollary of 
Lemma \ref{lemma:comm-module}.

\begin{lemma}\label{lemma:finite}
Let the notation be as above.
Let $M$ be a finite-dimensional vector space
and let $S(x)=\sum_{i\in \Z}S_{(i)}x^{i}$ be an element of $(\End_{\C}M)((x))$.
Then, there exists a vertex algebra $(\C(s),D)$-module $(M,Y_M)$ 
with $Y_M(s,x)=S(x)$ if and only if the following three conditions hold:
\begin{itemize}
\item[(i)] For all $\alpha\in \C$, $S(x)-\alpha$ is an invertible element of $(\End_{\C}M)((x))$.
\item[(ii)] For all $i,j\in \Z$, $S_{(i)}S_{(j)}=S_{(j)}S_{(i)}$.
\item[(iii)] $dS(x)/dx=p(S(x))/q(S(x))$.
\end{itemize}
In this case, for $u(s)\in\C(s)$ we have $Y_M(u(s),x)=u(S(x))$ and hence
$(M,Y_M)$ is uniquely determined by $S(x)$.
\end{lemma}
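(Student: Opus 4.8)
The plan is to invoke Lemma~\ref{lemma:comm-module} with the single-element generating set $\mathcal{B}=\{s\}$ for the $\C$-algebra $\C(s)$, reducing the problem to translating each of the five conditions of that lemma into a statement about $S(x)=Y_M(s,x)$. Since $\C(s)$ is a field and $g$ is trivial here (the untwisted case, $p=1$), the relevant map is $Y_M(\cdot,x)\colon \C(s)\to(\End_\C M)((x))$. The decisive observation is that, once we know $M$ is a finitely generated $\A_M(\C(s))$-module and that $Y_M(\cdot,x)$ is a $\C$-algebra homomorphism, the image $Y_M(u(s),x)$ of any rational function $u(s)=f(s)/h(s)$ is forced to be $f(S(x))h(S(x))^{-1}=u(S(x))$, provided $h(S(x))$ is invertible in $(\End_\C M)((x))$. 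This is exactly where condition~(i) must enter: the invertibility of $S(x)-\alpha$ for every $\alpha\in\C$ guarantees, via partial fractions and factorization of $h$ over $\C$, that $h(S(x))$ is invertible, so every rational function can be evaluated and $Y_M$ is well defined on all of $\C(s)$.

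\textbf{The forward direction.}
Assuming $(M,Y_M)$ is a vertex algebra $(\C(s),D)$-module, I would first note that by the second Remark following Proposition~\ref{proposition:comm-module}, finite-dimensionality of $M$ forces $Y_M(a,x)\in(\End_\C M)((x))$ for every $a$; in particular $M$ is a finitely generated (indeed finite-dimensional) $\A_M(\C(s))$-module, so Lemma~\ref{lemma:comm-module} applies. Condition~(ii) is then immediate from condition~(3) of that lemma with $a=b=s$. For condition~(iii), I would apply condition~(5) of Lemma~\ref{lemma:comm-module}: since $Y_M$ is a homomorphism and $Ds=p(s)/q(s)$, we have $Y_M(Ds,x)=Y_M(p(s)/q(s),x)=p(S(x))q(S(x))^{-1}$, while the left side equals $dY_M(s,x)/dx=dS(x)/dx$; equating gives~(iii). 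Condition~(i) is the most substantive: for $\alpha\in\C$, the element $s-\alpha$ is invertible in the field $\C(s)$, and since $Y_M$ is a $\C$-algebra homomorphism it sends the inverse of $s-\alpha$ to an inverse of $Y_M(s-\alpha,x)=S(x)-\alpha$, so $S(x)-\alpha$ is invertible in $(\End_\C M)((x))$.

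\textbf{The converse direction.}
Assuming (i)--(iii), I would construct $Y_M$ by setting $Y_M(u(s),x)=u(S(x))$ for all $u(s)\in\C(s)$ and then verify this is well defined and satisfies Lemma~\ref{lemma:comm-module}(1)--(5). Well-definedness is the crux: for $u(s)=f(s)/h(s)$ with $h\neq 0$, factor $h(s)=c\prod(s-\alpha_j)$ over $\C$; condition~(i) makes each $S(x)-\alpha_j$ invertible, hence $h(S(x))$ is invertible in the commutative ring $(\End_\C M)((x))$ (commutativity coming from~(ii)), so $u(S(x)):=f(S(x))h(S(x))^{-1}$ is defined and independent of the representation of $u$. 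Conditions~(1),(2),(3) of the lemma are then routine: $Y_M(s,x)=S(x)\in(\End_\C M)((x))$, $Y_M(1,x)=\id_M$, and the coefficients of $S(x)$ commute by~(ii). Condition~(4) holds because evaluation $u\mapsto u(S(x))$ is manifestly multiplicative, and condition~(5) is precisely~(iii) after re-expressing $dS/dx$ through the chain rule as the homomorphic image of $Ds$.

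\textbf{The main obstacle.}
The step I expect to require the most care is the well-definedness of $u\mapsto u(S(x))$ in the converse, namely checking that the assignment respects the field relations of $\C(s)$ rather than merely the polynomial relations. The point is that $(\End_\C M)((x))$ is not a field, so one cannot blithely substitute into rational functions; the entire force of condition~(i) is to supply the invertibility needed to make the substitution homomorphism land inside a genuine commutative subring where denominators clear. I would handle this by verifying that the evaluation map is a well-defined $\C$-algebra homomorphism on the localization $\C[s][h^{-1}\mid h(S(x))\text{ invertible}]$, which by~(i) is all of $\C(s)$, and that equal rational functions map to equal operators because $f_1 h_2=f_2 h_1$ in $\C[s]$ implies $f_1(S)h_2(S)=f_2(S)h_1(S)$ in $(\End_\C M)((x))$, whence the quotients agree. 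The uniqueness clause follows at once, since any homomorphism extending $s\mapsto S(x)$ must send $u(s)$ to $u(S(x))$.
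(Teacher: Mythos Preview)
Your overall strategy matches the paper's, but there is a gap in how you invoke Lemma~\ref{lemma:comm-module}. The set $\mathcal{B}=\{s\}$ does \emph{not} generate $\C(s)$ as a $\C$-algebra; it generates only the polynomial ring $\C[s]$. Since Lemma~\ref{lemma:comm-module} requires $\mathcal{B}$ to generate $A$, it cannot be applied with your choice. The paper instead takes $\mathcal{B}=\{s\}\cup\{(s-\alpha)^{-1}\mid\alpha\in\C\}$, which does generate $\C(s)$.

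This matters concretely in the converse direction. With the correct generating set, condition~(5) of Lemma~\ref{lemma:comm-module} must be verified for every $(s-\alpha)^{-1}$, not only for $s$; your outline checks~(5) only at $s$ via~(iii), so the derivation compatibility is established only on $\C[s]$. The paper closes this by the short computation
\[
Y_{M}\big(D((s-\alpha)^{-1}),x\big)
= -\,Y_{M}(Ds,x)\,(S(x)-\alpha)^{-2}
= \dfrac{d}{dx}\,(S(x)-\alpha)^{-1},
\]
which is exactly the missing check. Once you enlarge $\mathcal{B}$, the remaining conditions (1), (3), (4) for the new generators follow from your well-definedness discussion, so this is the only substantive omission. (A minor aside: the Remark you cite in the forward direction concerns finite-dimensional $A$, not finite-dimensional $M$; the fact that $Y_M(a,x)\in(\End_{\C}M)((x))$ follows directly from axiom~(M1) together with $\dim_{\C}M<\infty$.)
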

\begin{proof}
We use Lemma \ref{lemma:comm-module} by taking ${\mathcal B}=\{s\}\cup\{(s-\alpha)^{-1}\ |\ \alpha\in\C\}$.
Suppose that $(M,Y_{M})$ is a vertex algebra $(\C(s),D)$-module.
Then, $Y_{M}(\cdot,x)$ is a $\C$-algebra homomorphism 
from $\C(s)$ to $(\End_{\C}M)((x))$.
For all $\alpha\in\C$, since $s-\alpha$ is an invertible element in $\C(s)$,
so is $Y_{M}(s-\alpha,x)=S(x)-\alpha$ in $(\End_{\C}M)((x))$.
The other conditions in  (ii) and (iii) are clearly hold.

Conversely, suppose that $(M,Y_{M})$ satisfies the conditions (i)--(iii).
Since $S(x)-\alpha$ is an invertible element of $(\End_{\C}M)((x))$ for all $\alpha\in\C$,
we obtain $q(S(x))\neq 0$ and we can define
the $\C$-algebra homomorphism $Y_M\colon \C(s)\ni u(s)\mapsto u(S(x))\in (\End_{\C}M)((x))$.
Since for all $\alpha\in\C$ each coefficient of $x^j, j\in\Z$ in the expansion of $(S(x)-\alpha)^{-1}$ is 
a polynomial in $\{S_{(k)}\ |\ k\in\Z\}$,
${\A }_{M}(\C(s))$ is commutative.
For all $\alpha\in\C$ we have 
\begin{align*}
Y_{M}(D((s-\alpha)^{-1}),x)&=
Y_{M}(-(Ds)(s-\alpha)^{-2},x)\\
&=-Y_{M}(Ds,x)(Y_{M}(s,x)-\alpha)^{-2}\\
&=
\dfrac{d}{dx}(Y_{M}(s,x)-\alpha)^{-1}.
\end{align*}
We conclude that $(M,Y_M)$ is a vertex algebra $(\C(s),D)$-module.
\end{proof}

Note that there is no nontrivial finite-dimensional associative algebra $\C(s)$-module 
since $\C(s)$ is an infinite-dimensional $\C$-vector space and $\C(s)$ is a field.

Let $M$ be a finite-dimensional vector space over $\C$.
For $X\in \End_{\C}M$, $X^{[0]}$ denotes the semisimple part of $X$
and $X^{[1]}$ denotes the nilpotent part of $X$.
For $H(x)=\sum_{i\in \Z}H_{(i)}x^{i}\in (\End_{\C}M)[[x,x^{-1}]]$,
$H(x)^{[0]}$ denotes $\sum_{i\in \Z}H_{(i)}^{[0]}x^{i}$ and
$H(x)^{[1]}$ denotes $\sum_{i\in \Z}H_{(i)}^{[1]}x^{i}$.
We call $H(x)^{[0]}$ the semisimple part of $H(x)$.
For $H(x)=\sum_{i=L}^{\infty}H_{(i)}x^i\in (\End_{\C}M)((x))$ with $H_{(L)}\neq 0$, 
$\ld(H(x))$ denotes $L$ and $\lc(H(x))$ denotes $H_{(L)}$.

For a finite-dimensional indecomposable vertex algebra $\C(s)$-module $(M,Y_M)$,
we denote $Y_{M}(s,x)$ by $S(x)$.
Let $J_n$ denote the following $n\times n$ matrix:
\begin{align*}
J_n&=\begin{pmatrix}
0&1&0&\cdots&0\\
\vdots&\ddots&\ddots&\ddots&\vdots\\
\vdots&&\ddots&\ddots&0\\
\vdots&&&\ddots&1&\\
0&\cdots&\cdots&\cdots&0
\end{pmatrix}.
\end{align*}

\begin{theorem}\label{theorem:untwist}
Let the notation be as above.
Let $\alpha$ be a non-zero complex number.
We have the following results:
\begin{enumerate}
\item There exists a non-zero finite-dimensional indecomposable vertex algebra $(\C(s),D)$-module $M$
with $\ld(S(x)^{[0]})>0$ and with $\lc(S(x)^{[0]})=\alpha$
if and only if $p(0)q(0)\neq 0$ and 
$\alpha=p(0)/q(0)$.
Moreover, in this case $\ld(S(x)^{[0]})=1$ and $S(x)\in (\End_{\C}M)[[x]]$.
\item 
There exists a non-zero finite-dimensional indecomposable  vertex algebra $(\C(s),D)$-module $M$
with $\ld(S(x)^{[0]})=0$ and with $\lc(S(x)^{[0]})=\alpha$
if and only if $p(\alpha)q(\alpha)\neq 0$. Moreover, in this case $S(x)\in (\End_{\C}M)[[x]]$
and $S^{[0]}_{(1)}=p(\alpha)/q(\alpha)$.
\item There exists a non-zero finite-dimensional indecomposable  vertex algebra $(\C(s),D)$-module $M$
with $\ld(S(x)^{[0]})<0$ and with $\lc(S(x)^{[0]})=\alpha$
if and only if $\deg p=\deg q+2$ and $\alpha=-q_{N_q}/p_{N_p}$.
Moreover, in this case $\ld(S(x)^{[0]})=-1$.
\end{enumerate}
In each case, for all positive integers $n$,
there exists a unique $n$-dimensional indecomposable vertex algebra $(\C(s),D)$-module
which satisfies the conditions up to isomorphism.
\begin{proof}
\begin{enumerate}
\item
Let $n$ be a positive integer and
let $(M,Y_{M})$ be an $n$-dimensional indecomposable vertex algebra $(\C(s),D)$-module
and let $S(x)$ denotes $Y_{M}(s,x)$.
Since $M$ is indecomposable, $S(x)^{[0]}$ is an element of $\C((x))\id$.
If $S(x)^{[0]}=0$, then $S(x)$ is nilpotent. This is impossible since $\C(s)$ is a field. 
Thus, we write $S(x)^{[0]}=\sum_{i=L}^{\infty}S^{[0]}_{(i)}x^i$ where $L=\ld(S(x)^{[0]})$.
Since $D=(p(s)/q(s))d/ds$, we have
\begin{align}\label{eqn:sqp}
\dfrac{dS(x)}{dx}
&=q(S(x))^{-1}p(S(x))
\end{align}
or equivalently
\begin{align}\label{eqn:qsp}
q(S(x))\dfrac{dS(x)}{dx}&=p(S(x)).
\end{align}
Since $q(S(x))$ is an invertible element of $(\End_{\C}M)((x))$,
$q(S(x)^{[0]})$ is not zero.
Taking the semisimple part of \eqref{eqn:qsp}, we have
\begin{align}
\label{eqn:qs0p}
q(S(x)^{[0]})\dfrac{dS(x)^{[0]}}{dx}
&=p(S(x)^{[0]}).
\end{align}
Suppose that $L$ is a positive integer, namely $S(x)^{[0]}$ is an element of $x\C[[x]]$.
Then, 
$S^{[0]}_{(0)}=0$ and hence $S_{(0)}$ is a nilpotent element of $\End_{\C}M$.

We shall show $p(0)q(0)\neq 0$, 
$\lc(S(x)^{[0]})=1$, and $\lc(S(x)^{[0]})=p(0)/q(0)$.
In \eqref{eqn:qs0p}, 
the term with the lowest degree of the left-hand side is
$q_{L_q}L(S^{[0]}_{(L)})^{L_q+1}x^{L(L_q+1)-1}$
and the term with the lowest degree of the right-hand side is
$p_{L_p}(S^{[0]}_{(L)})^{L_p}x^{LL_p}$.
Comparing these terms, we have $L(L_p-L_q-1)=-1$.
Therefore, $L=1$ and $L_p=L_q$. We have
$L_p=L_q=0$ since $p(x)$ and $q(x)$ are coprime.
Thus, both $p_0, q_0$ are not zero.
Comparing the coefficients of these terms, we also have $S^{[0]}_{(1)}=p_{0}/q_{0}=p(0)/q(0)$.

We shall show that $S(x)$ is an element of $(\End_{\C}M)[[x]]$.
In order to do that, we identify $\End_{\C}M$ with $\Mat_{n}(\C)$
by fixing  a basis of $M$ so that the representation  matrix of each element of ${\A }_{M}(\C(s))$ with respect to the basis
is upper triangular matrix for a while. 
We write the expansion $S(x)=\sum_{k=0}^{n-1}S(x)^{(k)},
S(x)^{(k)}\in \Delta_{k}(\C((x)))$ as in Section 3.
We recall that $S(x)^{[0]}$ is equal to the diagonal part $S(x)^{(0)}$.
We need to expand $(dS(x)/dx)q(S(x))$ and $p(S(x))$.
By the same computation as \eqref{eq:Tx}, we have 
\begin{align*}
&\dfrac{dS(x)}{dx}q(S(x))\\
&
=\sum_{j_0=0}^{n-1}\dfrac{dS(x)^{(j_0)}}{dx}
\big(
\sum_{i=0}^{N_q}q_i
\sum_{
0\leq j_1,\ldots,j_i\leq n-1}
S(x)^{(j_1)}\cdots S(x)^{(j_i)}\big)\\
&
=\sum_{i=0}^{N_q}q_i
\sum_{
0\leq j_0,j_1,\ldots,j_i\leq n-1}
\dfrac{dS(x)^{(j_0)}}{dx}
S(x)^{(j_1)}\cdots S(x)^{(j_i)}
\\
&=
\sum_{k=0}^{n-1}
\sum_{i=0}^{N_q}q_i
\sum_{\begin{subarray}{c}
0\leq j_0,j_1,\ldots,j_i\leq k\\
j_0+j_1+\cdots+j_i=k\end{subarray}}
\dfrac{dS(x)^{(j_0)}}{dx}S(x)^{(j_1)}\cdots S(x)^{(j_i)}
\\
&=\dfrac{dS(x)^{(0)}}{dx}q(S(x)^{(0)})\\
&\quad{}+
\sum_{k=1}^{n-1}\Big(\dfrac{dS(x)^{(k)}}{dx}q(S(x)^{(0)})+
\dfrac{dS(x)^{(0)}}{dx}\dfrac{dq}{ds}(S(x)^{(0)})S(x)^{(k)}\\
&\quad{}+
\sum_{i=0}^{N_q}q_i
\sum_{\begin{subarray}{c}
0\leq j_0,j_1,\ldots,j_i<k\\
j_0+j_1+\cdots+j_i=k\end{subarray}}
\dfrac{dS(x)^{(j_0)}}{dx}S(x)^{(j_1)}\cdots S(x)^{(j_i)}\Big)
\end{align*}
and
\begin{align*}
p(S(x))&=
p(S(x)^{(0)})+\sum_{k=1}^{n-1}\big(\dfrac{dp}{ds}(S(x)^{(0)})S(x)^{(k)}\\
&\quad{}+
\sum_{i=0}^{N_p}p_i
\sum_{\begin{subarray}{c}
0\leq j_1,\ldots,j_i<k\\
j_1+\cdots+j_i=k\end{subarray}}
S(x)^{(j_1)}\cdots S(x)^{(j_i)}\big).
\end{align*}
Thus, it follows from \eqref{eqn:qsp} that 
for all $k=1,2,\ldots,n-1$
\begin{align}
\label{eqn:semi-nil}
&\dfrac{dS(x)^{(k)}}{dx}\nonumber\\
&=q(S(x)^{(0)})^{-1}
\Big(\big(-\dfrac{dS(x)^{(0)}}{dx}\dfrac{dq}{ds}(S(x)^{(0)})
+\dfrac{dp}{ds}(S(x)^{(0)})\big)S(x)^{(k)}\nonumber\\
&\quad{}-\sum_{i=0}^{N_q}q_i
\sum_{\begin{subarray}{c}
0\leq j_0,j_1,\ldots,j_i<k\\
j_0+\cdots+j_i=k\end{subarray}}
\dfrac{dS(x)^{(j_0)}}{dx}S(x)^{(j_1)}\cdots S(x)^{(j_i)}\nonumber\\
&\quad{}+
\sum_{i=0}^{N_p}p_i
\sum_{\begin{subarray}{c}
0\leq j_1,\ldots,j_i<k\\
j_1+\cdots+j_i=k\end{subarray}}
S(x)^{(j_1)}\cdots S(x)^{(j_i)}\Big).
\end{align}
Now we show that $S(x)^{(k)}$ is an element of $(\End_{\C}M)[[x]]$ by induction on $k$.
The case $k=0$ follows from $\ld(S(x)^{(0)})=1$.
For $k>0$, suppose that $\ld(S(x)^{(k)})<0$. This implies
$\ld(dS(x)^{(k)}/dx)=\ld(S(x)^{(k)})-1$.
Since $q_0\neq 0$ and $S(x)^{(0)}$ is an element of $x\C[[x]]$, $q(S(x)^{(0)})^{-1}$ is an element of $\C[[x]]$.
Thus, the lowest degree of the right-hand side of \eqref{eqn:semi-nil} is 
greater than or equal to $\ld(S(x)^{(k)})$ by the induction assumption.
This contradicts $\ld(dS(x)^{(k)}/dx)=\ld(S(x)^{(k)})-1$. 
We conclude that $S(x)$ is an element of $(\End_{\C}M)[[x]]$.

We shall show that $S(x)$ is uniquely determined by $D$ and $S_{(0)}$.
For all positive integer $m$, we expand $(dS(x)/dx)q(S(x))$ and $p(S(x))$ 
modulo $x^{m}\C[[x]]$:
\begin{align}
\label{eq:expand-1}
& \dfrac{dS(x)}{dx}q(S(x))
\equiv(\sum_{j=1}^{m}jS_{(j)}x^{j-1})
\sum_{i=0}^{N_q}q_i(\sum_{j=0}^{m-1}S_{(j)}x^j)^i\nonumber\\
&\equiv\sum_{k=0}^{m-1}
\sum_{i=0}^{N_q}q_i
\sum_{j_0=1}^{m}
\sum_{\begin{subarray}{c}
0\leq j_1,\ldots,j_i\leq m-1\\
j_0+j_1+\cdots+j_i=k+1\end{subarray}}
j_0S_{(j_0)}S_{(j_1)}\cdots S_{(j_i)}x^{k}\nonumber\\
&=
mS_{(m)}q(S_{(0)})x^{m-1}\nonumber\\
&\quad{}+\sum_{k=0}^{m-1}
\sum_{i=0}^{N_q}q_i\sum_{j_0=1}^{m-1}
\sum_{\begin{subarray}{c}
0\leq j_1,\ldots,j_i\leq m-1\\
j_0+j_1+\cdots+j_i=k+1\end{subarray}}
j_0S_{(j_0)}S_{(j_1)}\cdots S_{(j_i)}x^k \pmod{x^{m}\C[[x]]}
\end{align}
and 
\begin{align}
\label{eq:expand-2}
p(S(x))& \equiv 
\sum_{k=0}^{m-1}
\sum_{i=0}^{N_p}p_i
\sum_{\begin{subarray}{c}
0\leq j_1,\ldots,j_i\leq m-1\\
j_1+\cdots+j_i=k\end{subarray}}
S_{(j_1)}\cdots S_{(j_i)}x^k \pmod{x^{m}\C[[x]]}.
\end{align}
Comparing the coefficients of $x^{m-1}$ in \eqref{eq:expand-1} and \eqref{eq:expand-2},
it follows from \eqref{eqn:qsp} that
\begin{align}\label{eqn:s-induc}
&mS_{(m)}q(S_{(0)})\nonumber\\
&=
-\sum_{i=0}^{N_q}q_i\sum_{j_0=1}^{m-1}
\sum_{\begin{subarray}{c}
0\leq j_1,\ldots,j_i\leq m-1\\
j_0+j_1+\cdots+j_i=m\end{subarray}}
j_0S_{(j_0)}S_{(j_1)}\cdots S_{(j_i)} \nonumber\\
&\quad{}+
\sum_{i=0}^{N_p}p_i
\sum_{\begin{subarray}{c}
0\leq j_1,\ldots,j_i\leq m-1\\
j_1+\cdots+j_i=m-1\end{subarray}}
S_{(j_1)}\cdots S_{(j_i)}.
\end{align}
Since $q_0\neq 0$ and $S_{(0)}$ is nilpotent,
$q(S_{(0)})$ is an invertible element of $\End_{\C}M$ and 
$q(S_{(0)})^{-1}$ is a polynomial in $S_{(0)}$.
Thus, it follows by induction on $m$ that 
every $S_{(m)}$ is a polynomial in $S_{(0)}$
and is uniquely determined by $D$ and $S_{(0)}$.
We conclude that $S(x)$ is uniquely determined by $D$ and $S_{(0)}$.

Since every $S_{(m)},m\in\Z$ is a polynomial in $S_{(0)}$
and $M$ is an indecomposable ${\mathcal A}_{M}(\C(s))$-module,
the nilpotent element $S_{(0)}$ conjugates to $J_n$ 
under the identification of $\End_{\C}M$ with $\Mat_{n}(\C)$.
Therefore, Lemma \ref{lemma:finite} implies that $S(x)$ and hence $(M,Y_M)$ with $\dim_{\C}M=n$
is uniquely determined up to isomorphism under the conditions in (1).

Conversely, suppose that $p(0)q(0)\neq 0$ and $\alpha=p(0)/q(0)$.
We shall construct $S(x)\in (\End_{\C}M)[[x]]$ which satisfies the conditions in (1).
We identify $\End_{\C}M$ with $\Mat_{n}(\C)$
by fixing  a basis of $M$ and we use Lemma \ref{lemma:finite}.
Set $S_{(0)}=J_n$.
By \eqref{eqn:s-induc} we can inductively define $S_{(m)}$ for $m=1,2,\ldots$.
Reversing the argument used to get \eqref{eqn:s-induc} above,
it is easy to see that
the obtained upper triangular matrix $S(x)=\sum_{m=0}^{\infty}S_{(m)}x^{m}\in (\Mat_{n}(\C))[[x]]$ 
satisfies \eqref{eqn:qsp}.  
Taking the semisimple part of \eqref{eqn:s-induc},
we obtain the formula by replacing $S_{(j)}$ with $S_{(j)}^{[0]}$ for all $j\in\Z$ in \eqref{eqn:s-induc}.
By this formula for $m=1$, we have $S_{(1)}^{[0]}=p_{0}/q_{0}=\alpha$.
By this formula again, an inductive argument on $m$  shows 
$S(x)^{[0]}=\sum_{m=0}^{\infty}S_{(m)}^{[0]}x^{m}$ is a non-constant element of $\C[[x]]E_n$. 
This implies that $S(x)-\beta$ is an invertible element of 
$(\Mat_{n}(\C))((x))$
for each $\beta\in\C$ 
and that $\ld(S(x)^{[0]})=1$ and $\lc(S(x)^{[0]})=\alpha$ since $S^{[0]}_{(0)}=0$.
Thus, $q(S(x))$ is an invertible element of 
$(\Mat_{n}(\C))((x))$ and hence \eqref{eqn:sqp} holds.
Since all coefficients of $S(x)$ are polynomials 
in $S_{(0)}=J_n$, 
$S_{(i)}S_{(j)}=S_{(j)}S_{(i)}$ for all $i,j\in\Z$.
By Lemma \ref{lemma:finite}, we have obtained  an $n$-dimensional vertex algebra $(\C(s),D)$-module $M$
with $\ld(S(x)^{(0)})=1$ and with $\lc(S(x)^{(0)})=\alpha$.
This completes the proof of (1).

\item
Next, suppose that $L=0$.
Setting $\tilde{s}=s-\alpha$, we have
\begin{align*}
D&=\dfrac{p(\tilde{s}+\alpha)}{q(\tilde{s}+\alpha)}\dfrac{d}{d\tilde{s}}.
\end{align*}
Thus, this case reduces to the case of $L>0$.
It follows by (1) that $S^{[0]}_{(1)}=p(\alpha)/q(\alpha)$.

\item
Finally, suppose that $L$ is a negative integer.
Setting $\tilde{s}=1/s$, we have
$Y_{M}(\tilde{s},x)^{[0]}=1/S(x)^{[0]}\in x\C((x))$ and 
\begin{align*}
D&=\dfrac{p(1/\tilde{s})}{q(1/\tilde{s})}(-\tilde{s}^{2})\dfrac{d}{d\tilde{s}}.
\end{align*}
It follows by $S(x)^{[0]}\neq 0$ that $S(x)$ is invertible in $(\End_{\C}M)((x)))$
and that $(S(x)^{-1})^{[0]}=(S(x)^{[0]})^{-1}$.
Since $S(x)^{-1}$ is a polynomial in $S(x)$,
all coefficients in $S(x)^{-1}$ are commutative.
Thus, this case also reduces to the case of $L>0$.
\end{enumerate}
\end{proof}
\end{theorem}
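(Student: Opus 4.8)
The plan is to argue entirely through Lemma~\ref{lemma:finite}, whose condition (iii), after clearing the invertible factor $q(S(x))$, reads $q(S(x))\,dS(x)/dx=p(S(x))$. Because $M$ is indecomposable the commutative algebra $\A_M(\C(s))$ is local, so the semisimple part $S(x)^{[0]}$ is a \emph{scalar} Laurent series in $\C((x))$; applying the semisimple-part projection to the matrix identity produces the genuine scalar differential equation $q(S(x)^{[0]})\,dS(x)^{[0]}/dx=p(S(x)^{[0]})$. All three existence criteria can be read off from the lowest-order behaviour of this scalar equation, so I would prove part (1) in full and then deduce (2) and (3) from it by the substitutions $\tilde s=s-\alpha$ and $\tilde s=1/s$, which move the hypotheses $\ld(S(x)^{[0]})=0$ and $\ld(S(x)^{[0]})<0$ into the already-treated range $\ld>0$.

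For the necessity half of (1) I would write $S(x)^{[0]}=\alpha x^{L}+\cdots$ with $L=\ld(S(x)^{[0]})>0$ and compare lowest degrees in the scalar equation: the left-hand side has lowest degree $L(L_q+1)-1$ with coefficient $Lq_{L_q}\alpha^{L_q+1}$, while the right-hand side has lowest degree $LL_p$ with coefficient $p_{L_p}\alpha^{L_p}$. Equating the two degrees forces $L(L_p-L_q-1)=-1$, hence $L=1$ and $L_p=L_q$; coprimality of $p$ and $q$ then excludes a common factor $s$, so $L_p=L_q=0$, i.e. $p(0)q(0)\neq0$, and equating leading coefficients gives $\alpha=p(0)/q(0)$. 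This simultaneously yields $\ld(S(x)^{[0]})=1$ and fixes $\alpha$.

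Next I would establish $S(x)\in(\End_{\C}M)[[x]]$ together with uniqueness. Fixing a basis in which every element of $\A_M(\C(s))$ is upper triangular, the diagonal part of $S(x)$ equals $S(x)^{[0]}\in x\C[[x]]$, so the constant term $S_{(0)}$ is nilpotent and $q(S_{(0)})$ is a unit of $\End_{\C}M$ whose inverse is a polynomial in $S_{(0)}$. Solving the matrix equation superdiagonal by superdiagonal gives, for each off-diagonal block $S(x)^{(k)}$, a first-order linear equation whose forcing term involves only the blocks $S(x)^{(j)}$ with $j<k$; an induction on $k$, comparing lowest degrees and using $q(S(x)^{(0)})^{-1}\in\C[[x]]$, shows that no negative power of $x$ can appear, whence $S(x)\in(\End_{\C}M)[[x]]$. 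Uniqueness follows because the coefficientwise recursion coming from $q(S(x))\,dS(x)/dx=p(S(x))$ expresses every $S_{(m)}$ as a polynomial in $S_{(0)}$, and indecomposability then forces the nilpotent $S_{(0)}$ to be a single Jordan block $J_n$. I expect this triangular induction to be the main obstacle: the one genuinely delicate step is controlling the lowest-degree behaviour of each $S(x)^{(k)}$ through the convolution sums produced by $q(S(x))\,dS(x)/dx$ and $p(S(x))$ and checking that the inductive solution never introduces a negative power of $x$.

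For the converse I would take $S_{(0)}=J_n$, define the higher $S_{(m)}$ by the same recursion, and verify the three conditions of Lemma~\ref{lemma:finite}: commutativity (ii) is automatic since all $S_{(m)}$ are polynomials in $J_n$, condition (iii) holds by construction, and because the semisimple part is the nonconstant series $\alpha x+\cdots$ the matrix $S(x)-\beta$ is invertible in $(\End_{\C}M)((x))$ for every $\beta\in\C$, giving (i). Part (2) then follows by setting $\tilde s=s-\alpha$, which turns the derivation into $(p(\tilde s+\alpha)/q(\tilde s+\alpha))\,d/d\tilde s$ and shifts $\ld(S(x)^{[0]})$ to a positive value, so (1) applies with $p(0),q(0)$ replaced by $p(\alpha),q(\alpha)$. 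Part (3) follows by setting $\tilde s=1/s$, giving the derivation $-\tilde s^{2}(p(1/\tilde s)/q(1/\tilde s))\,d/d\tilde s$; clearing powers of $\tilde s$ shows the hypothesis of (1) is met precisely when $\deg p=\deg q+2$, and in that case the leading-coefficient formula of (1) translates, via $\lc(Y_M(\tilde s,x)^{[0]})=1/\alpha$, into $\alpha=-q_{N_q}/p_{N_p}$.
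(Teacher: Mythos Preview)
Your proposal is correct and follows essentially the same approach as the paper: the paper also derives the scalar equation $q(S(x)^{[0]})\,dS(x)^{[0]}/dx=p(S(x)^{[0]})$, compares lowest-degree terms to obtain $L=1$, $L_p=L_q=0$ and $\alpha=p(0)/q(0)$, proves $S(x)\in(\End_{\C}M)[[x]]$ via the same superdiagonal induction (your anticipated ``delicate step'' is handled exactly as you suggest, by a lowest-degree contradiction using $q(S(x)^{(0)})^{-1}\in\C[[x]]$), establishes uniqueness through the coefficientwise recursion expressing each $S_{(m)}$ as a polynomial in the nilpotent $S_{(0)}\sim J_n$, constructs the converse from $S_{(0)}=J_n$, and reduces (2) and (3) by the same substitutions $\tilde s=s-\alpha$ and $\tilde s=1/s$.
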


\section{Examples}
Throughout this section,
$D$ is a non-zero derivation of $\C(s)$ and
$f(s)=\sum_{j=0}^{m}f_js^j\in\C[s]$ is a square-free polynomial of degree $m$ at least $3$.
Set $K=\C(s)[t]/(t^2-f(s))$, which is a quadratic extension of $\C(s)$.
Let $\sigma$ be the generator of the Galois group of $K$ over $\C(s)$ mapping 
$t$ to $-t$.
Since $K$ is a finite Galois extension of $\C(s)$,
$D$ can be uniquely extends to 
a derivation of $K$, which is also denoted by $D$.

Let $(M,Y_M)$ be a finite-dimensional indecomposable $(\C(s),D)$-module.
In this section, we shall investigate untwisted or $\sigma$-twisted 
vertex algebra $(K,D)$-module structures over $(M,Y_M)$.
We denote $Y_{M}(s,x)$ by $S(x)$ and its semisimple part 
by  $S(x)^{[0]}=\sum_{i=L}^{\infty}S^{[0]}_{(i)}x^i$ with
$S^{[0]}_{(L)}\neq 0$ as in Section 4.
Theorem \ref{theorem:untwist} says that $L=\ld(S(x)^{[0]})=1,0,$ or $-1$. 
\begin{proposition}
Let $M$ be a finite-dimensional indecomposable 
vertex algebra $(\C(s),D)$-module
and let $(M,\tilde{Y}_{M})$ be a $g$-twisted vertex algebra $(K,D)$-module structure 
over $(M,Y_M)$ in Theorem \ref{theorem:correspondence}, where $g=1$ or $\sigma$.
Then
\begin{enumerate}
\item 
In the case of $\ld(S(x)^{[0]})=1$, 
$g=\sigma$ if and only if $f_0=0$.
\item
In the case of $\ld(S(x)^{[0]})=0$, 
$g=\sigma$ if and only if $f(S^{[0]}_{(0)})=0$.
\item
In the case of $\ld(S(x)^{[0]})=-1$, 
$g=\sigma$ if and only if $\deg f$ is odd.
\end{enumerate}
\end{proposition}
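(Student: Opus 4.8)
The plan is to extract from the proof of Theorem~\ref{theorem:correspondence} the precise mechanism that selects $g$, and then reduce everything to a single parity count. Here $K^G=\C(s)$, the primitive element is $\theta=t$ with minimal polynomial $P(Z)=Z^2-f(s)$, and $\psi[K^G,(M,Y_M)]$ sends $s$ to $S(x)^{[0]}$. Consequently $\hat{P}^{[0]}(Z)=Z^2-f(S(x)^{[0]})$, and its splitting field over $Q$ is $\hat{K}=Q(\sqrt{f(S(x)^{[0]})})\subset\Omega$. Since $|G|=2$, the order $p$ of the element $g$ produced in the proof of Theorem~\ref{theorem:correspondence}(1) is either $1$ or $2$: that $g$ generates $\Gal(\hat{K}/\hat{K}\cap\C((x)))$, so $g=1$ precisely when $\hat{K}\subseteq\C((x))$, i.e. when $\sqrt{f(S(x)^{[0]})}\in\C((x))$, and $g=\sigma$ otherwise (using the identification of $G$ with $\Gal(\hat{K}/Q)$ made in that proof, under which the nontrivial element is $\sigma$).

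First I would reduce the condition that $f(S(x)^{[0]})$ be a square in $\C((x))$ to a statement about $\ld$. Because $\C$ is algebraically closed, a nonzero element $h(x)\in\C((x))$ is a square if and only if $\ld(h(x))$ is even: the leading coefficient always admits a square root, after which the remaining coefficients are solved recursively. Since $f\neq 0$ and $\psi[K^G,(M,Y_M)]$ is injective ($\C(s)$ being a field), we have $f(S(x)^{[0]})\neq 0$, so $\ld(f(S(x)^{[0]}))$ is defined. Combining this with the previous paragraph yields the clean criterion
\[
g=\sigma\quad\Longleftrightarrow\quad \ld\big(f(S(x)^{[0]})\big)\ \text{is odd}.
\]
It then remains to compute this lowest degree in each case, using $S(x)^{[0]}=\sum_{i=L}^{\infty}S^{[0]}_{(i)}x^{i}$ with $S^{[0]}_{(L)}\neq 0$ and $L\in\{1,0,-1\}$ as furnished by Theorem~\ref{theorem:untwist}.

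In case (1) ($L=1$) I would expand $f(S(x)^{[0]})=f_0+f_1S^{[0]}_{(1)}x+O(x^2)$: if $f_0\neq 0$ then $\ld=0$ is even and $g=1$, whereas if $f_0=0$ then square-freeness forces $f_1\neq 0$ (else $s^2\mid f$), so together with $S^{[0]}_{(1)}\neq 0$ one gets $\ld=1$ and $g=\sigma$; hence $g=\sigma\iff f_0=0$. In case (2) ($L=0$) I would Taylor-expand about $S^{[0]}_{(0)}$: the constant term is $f(S^{[0]}_{(0)})$, and when it vanishes, square-freeness makes $S^{[0]}_{(0)}$ a simple root so $f'(S^{[0]}_{(0)})\neq 0$, while Theorem~\ref{theorem:untwist}(2) gives $S^{[0]}_{(1)}=p(S^{[0]}_{(0)})/q(S^{[0]}_{(0)})\neq 0$; thus $\ld=1$ and $g=\sigma$ exactly when $f(S^{[0]}_{(0)})=0$. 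In case (3) ($L=-1$) the lowest-degree term of $f(S(x)^{[0]})=\sum_{j=0}^{m}f_j(S(x)^{[0]})^j$ comes from $j=m$ and equals $f_m(S^{[0]}_{(-1)})^{m}x^{-m}$ with $f_m\neq 0$, so $\ld=-m$ and $g=\sigma\iff m=\deg f$ is odd.

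The routine part consists of the three Laurent expansions; the genuine content, and the step I expect to need the most care, is justifying that no cancellation destroys the predicted leading term. This is exactly where square-freeness of $f$ enters (simple roots in case (2), and $f_1\neq 0$ when $f_0=0$ in case (1)) together with the nonvanishing of $S^{[0]}_{(1)}$ supplied by Theorem~\ref{theorem:untwist}; in case (3) there is nothing to cancel, since $f_m\neq 0$ is the sole lowest-degree contributor. A secondary point worth stating cleanly is the passage from the abstract generator $g$ of $\Gal(\hat{K}/\hat{K}\cap\C((x)))$ to the element $\sigma$ of $\Gal(K/\C(s))$, which is forced by $|G|=2$ and the identification of $G$ with $\Gal(\hat{K}/Q)$.
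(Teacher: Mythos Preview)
Your proposal is correct and follows essentially the same route as the paper: both reduce, via the construction in Theorem~\ref{theorem:correspondence}, to deciding whether $\sqrt{f(S(x)^{[0]})}\in\C((x))$, then compute the leading term of $f(S(x)^{[0]})$ in the three cases $L=1,0,-1$ using square-freeness of $f$ and the nonvanishing of $S^{[0]}_{(1)}$ from Theorem~\ref{theorem:untwist}. The only cosmetic difference is that you state the criterion ``nonzero $h\in\C((x))$ is a square iff $\ld(h)$ is even'' explicitly, whereas the paper leaves this implicit.
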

\begin{proof}
We denote by $\sqrt{f(S(x)^{[0]})}$ a square root of $f(S(x)^{[0]})$ in 
$\Omega=\cup_{i=1}^{\infty}\C((x^{1/i}))$.
We denote by $\psi$ the $\C$-algebra homomorphism $\psi[\C(s),(M,Y_{M})]\colon \C(s)\rightarrow
\C((x))$ defined just after \eqref{eq:proj} and by $Q$ 
the image of $\psi$.
Let $\hat{K}$ denote 
the splitting field for $Z^2-\psi(f(s))=Z^2-f(S(x)^{[0]})\in Q[Z]$ in $\Omega$
as in the proof of Theorem \ref{theorem:correspondence}.
We have $\hat{K}\cap \C((x))=\hat{K}$ if and only if 
$\sqrt{f(S(x)^{[0]})}\in\C((x))$.
By the argument just after \eqref{eq:galois},
$g$ is a generator of $\Gal(\hat{K}/\hat{K}\cap \C((x)))$.
Thus, $g=1$ if and only if $\sqrt{f(S(x)^{[0]})}\in \C((x))$.
We use the following expansion of $f(S(x)^{[0]})$:
\begin{align}
&f(S(x)^{[0]})=\sum_{j=0}^{m}f_j(\sum_{i=L}^{\infty}S^{[0]}_{(i)}x^i)^j\nonumber\\
&=\left\{
\begin{array}{ll}
f_0+f_1S^{[0]}_{(1)}x+\cdots,&\mbox{if }L=1,\\
f(S^{[0]}_{(0)})+S^{[0]}_{(1)}f^{\prime}(S^{[0]}_{(0)})x+\cdots,&\mbox{if }L=0,\\
f_{m}(S^{[0]}_{(-1)})^{m}x^{-m}
+\cdots
,&\mbox{if }L=-1.
\end{array}
\right.\label{eqn:expand}
\end{align}

\begin{enumerate}
\item In this case if $f_0\neq 0$, then $\sqrt{f(S(x)^{[0]})}\in \C((x))$.
If $f_0=0$, then $f_1\neq 0$ since $f$ is square-free.
Thus, $\sqrt{f(S(x)^{[0]})}\not\in \C((x))$.
\item In this case $S^{[0]}_{(1)}\neq 0$ by Theorem \ref{theorem:untwist} (2).
If $f(S^{[0]}_{(0)})\neq 0$, then $\sqrt{f(S(x)^{[0]})}\in \C((x))$.
If $f(S^{[0]}_{(0)})=0$, then $\sqrt{f(S(x)^{[0]})}\not\in \C((x))$
since $f$ is square-free and $S^{[0]}_{(1)}\neq 0$.
\item In this case, the assertion follows easily from (\ref{eqn:expand}). 
\end{enumerate}
\end{proof}

\section*{Acknowledgements}
The author thanks the referee for pointing out a mistake 
in the proof of the earlier version of Theorem \ref{theorem:correspondence}.

\end{document}